\documentclass[12pt, reqno, a4paper]{amsart}
\usepackage[margin=1.2in]{geometry}
\numberwithin{equation}{section}
\usepackage{amssymb,amsfonts,amsthm}
\usepackage[utf8]{inputenc}
\usepackage{listings}
\usepackage{bm}
\usepackage[hyperpageref]{backref}
\usepackage{esint}
\usepackage{color}
\usepackage[bookmarks]{hyperref}
\usepackage{siunitx}
\usepackage{bigints}
\usepackage{hyperref}
\pdfstringdefDisableCommands{\def\eqref#1{(\ref{#1})}}
\addtolength{\textheight}{3mm} \addtolength{\textwidth}{11mm}
\addtolength{\oddsidemargin}{-8mm}
\addtolength{\evensidemargin}{-8mm} \addtolength{\topmargin}{-5mm}
\usepackage{mathtools}
\usepackage{mathrsfs}

\allowdisplaybreaks[4]

\vfuzz8pt 

\newtheoremstyle{myremark}{10pt}{10pt}{}{}{\bfseries}{.}{.5em}{}

\newtheorem{theorem}{Theorem}[section]

\newtheorem{lemma}[theorem]{Lemma}

\theoremstyle{definition}

\newtheorem{remark}{Remark}

\usepackage{hyperref}


 \newcommand{\norm}[1]{\left\Vert#1\right\Vert}

\allowdisplaybreaks[4]


\begin{document}

\title{Fractional Sobolev logarithmic inequalities}

\author{Vivek Sahu}

\address{Department of Mathematics and Statistics, Indian Institute of Technology Kanpur, Kanpur - 208016, Uttar Pradesh, India}
\email{viveksahu20@iitk.ac.in, viiveksahu@gmail.com}

\subjclass[2020]{46E35, 26D15, 26A33}

\keywords{Sobolev logarithmic  inequality, fractional Sobolev space}

\date{}

\dedicatory{}

\begin{abstract}
We establish new  Euclidean Sobolev logarithmic inequalities in the framework of fractional Sobolev spaces and their weighted version. Our approach relies on a interpolation inequality, which can be viewed as a fractional Caffarelli–Kohn–Nirenberg type inequality. We further relate the optimal constant in this interpolation inequality to a corresponding variational problem. These results extend classical Sobolev logarithmic inequalities to the nonlocal Euclidean framework and provide new tools for analysis in fractional Sobolev spaces.
\end{abstract}

\maketitle


\section{Introduction}

This article focuses on proving a fractional version of the Euclidean $L^{p}$-Sobolev logarithmic inequality. In the local case, this inequality has been studied by many researchers. In particular, del Pino and Dolbeault \cite{Manuel2003} proved the Sobolev logarithmic inequality with the optimal constant.
They showed that, for $1 < p < d$, and for any $u \in W^{1,p}(\mathbb{R}^{d})$ with normalization $\int_{\mathbb{R}^{d}} |u(x)|^{p} \, dx = 1$, we have
\begin{equation}\label{log sob ineq in local}
    \int_{\mathbb{R}^{d}} |u(x)|^{p} \log |u(x)| \, dx \leq \frac{d}{p^{2}} \log \left[ C_{p} \int_{\mathbb{R}^{d}} |\nabla u(x)|^{p} \, dx \right],
\end{equation}
where 
\begin{equation}\label{best const C_p}
    C_{p} = \frac{p}{d} \left( \frac{p-1}{e}  \right)^{p-1} \pi^{- \frac{p}{2}} \left[ \frac{\Gamma \left(\frac{d}{2}+1 \right)}{\Gamma\left( d \frac{p-1}{p} +1 \right) } \right]^{\frac{p}{d}}
\end{equation}
is the best constant. Equality holds if and only if, for some $\sigma > 0$ and $\tilde{x} \in \mathbb{R}^{d}$,
\begin{equation*}
    u(x) = \pi^{\frac{d}{2}} \sigma^{-d \frac{(p-1)}{p}} \frac{\Gamma \left( \frac{d}{2}+1 \right)}{\Gamma \left( d \frac{p-1}{p} +1 \right)} e^{- \frac{1}{\sigma} |x - \tilde{x}|^{\frac{p}{p-1}}}, \quad  \forall \ x \in \mathbb{R}^{d}.
\end{equation*}
This Sobolev logarithmic inequality is very important in the theory of partial differential equations. Since the Poincaré inequality in Sobolev spaces does not hold in $\mathbb{R}^{d}$, the above result can be seen as a natural analogue of the Sobolev logarithmic inequality in this case. For more work related to logarithmic Sobolev inequalities, we refer to \cite{Adams1979, Beckner1998, Beckner1999, Carlen1991, Gross1975, Toscani1997, Weissler}.

\smallskip

Using the Sobolev inequality, one can also derive the Sobolev logarithmic inequality. However, in this case the constant obtained is not sharp. From the Sobolev inequality (see \cite{Beckner1999} and \cite[Page No. 153]{Manuel2003}), we get
    \begin{equation*}
    \int_{\mathbb{R}^{d}} |u(x)|^{p} \log |u(x)| \, dx \leq \frac{d}{p^{2}} \log \left[ D_{p} \int_{\mathbb{R}^{d}} |\nabla u(x)|^{p} \, dx \right], \quad \forall \ u \in W^{1,p}(\mathbb{R}^{d}),
\end{equation*}
where $D_{p}$ is Talenti’s constant \cite{Talenti1976}, given by
\begin{equation*}
    D_{p} = \frac{1}{d} \left( \frac{p-1}{d-p} \right)^{p-1} \pi^{- \frac{p}{2}} \left( \frac{\Gamma \left( d \right) \Gamma \left( \frac{d}{2} +1 \right)}{\Gamma \left( \frac{d}{p} \right) \Gamma \left( d \frac{p-1}{p} +1 \right) }  \right)^{\frac{p}{d}}.
\end{equation*}

\smallskip

In this article, we establish a fractional version of the Sobolev logarithmic inequality of the type \eqref{log sob ineq in local}. For $p \geq 1$ and $s \in (0,1)$, the fractional Sobolev space $W^{s,p}(\mathbb{R}^{d})$ is defined as the completion of compactly supported smooth functions under the norm $( \|\cdot\|^{p}_{L^{p}(\mathbb{R}^{d})} + [\cdot]^{p}_{W^{s,p}(\mathbb{R}^{d})})^{\frac{1}{p}}$, where 
\begin{equation*}
    [u]_{W^{s,p}(\mathbb{R}^{d})} := \left( \int_{\mathbb{R}^{d}} \int_{\mathbb{R}^{d}}  \frac{|u(x)-u(y)|^{p}}{|x-y|^{d+sp}} \, dx \, dy \right)^{\frac{1}{p}} 
\end{equation*}
is called the Gagliardo seminorm.  For more literature on fractional Sobolev spaces, we refer to the Hitchhiker’s guide \cite{Hitchiker2012}.

\smallskip

The following theorem presents the main result of this article. We establish a fractional version of the Sobolev logarithmic inequality in the case $sp < d$, with an explicit constant. Furthermore, by letting $s \to 1$, the result reduces to the classical Sobolev logarithmic inequality. The precise statement is given below:

\begin{theorem}\label{Theorem: Frac log Sob ineq}
    Let $p>1$ and $s \in (0,1)$ be such that $sp<d$. Then for every $u \in W^{s,p}(\mathbb{R}^{d})$ satisfying $\int_{\mathbb{R}^{d}} |u(x)|^{p} \, dx =1$, the following fractional Sobolev logarithmic inequality holds
    \begin{equation}\label{ineq in Theorem 1.1}
         \int_{\mathbb{R}^{d}} |u(x)|^{p} \log  |u(x)|   \, dx \leq \frac{d}{sp^{2}} \log \left( \mathcal{C} \int_{\mathbb{R}^{d}} \int_{\mathbb{R}^{d}}  \frac{|u(x)-u(y)|^{p}}{|x-y|^{d+sp}} \, dx \, dy \right),
    \end{equation}
    where
    \begin{equation}
        \mathcal{C}= C(d,p) \left( \frac{s(1-s)}{(d-sp)^{p-1}} \right),
    \end{equation}
   with $C(d,p) > 0$ depending only on $d$ and $p$.
\end{theorem}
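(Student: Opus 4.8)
The plan is to derive \eqref{ineq in Theorem 1.1} from the fractional interpolation inequality of the paper, specialized to the Sobolev exponent, combined with a convexity (Jensen) argument; this is the nonlocal counterpart of the derivation of the classical logarithmic Sobolev inequality from the Sobolev inequality recalled above. Set $p^{*}_{s}:=\tfrac{dp}{d-sp}$. The only analytic input I would invoke is the fractional Sobolev inequality in the form
\begin{equation*}
    \left(\int_{\R^{d}}|u(x)|^{p^{*}_{s}}\,dx\right)^{p/p^{*}_{s}}\;\le\;\mathcal{C}\int_{\R^{d}}\int_{\R^{d}}\frac{|u(x)-u(y)|^{p}}{|x-y|^{d+sp}}\,dx\,dy,\qquad u\in W^{s,p}(\R^{d}),
\end{equation*}
with $\mathcal{C}=C(d,p)\,\tfrac{s(1-s)}{(d-sp)^{p-1}}$: this is exactly the case $q=p^{*}_{s}$ of the fractional Caffarelli--Kohn--Nirenberg type interpolation inequality, and essentially the entire content of Theorem~\ref{Theorem: Frac log Sob ineq} is packed into producing it with a constant of this precise shape.

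\smallskip

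\textbf{The reduction.} Granting the displayed Sobolev inequality, I would fix $u\in W^{s,p}(\R^{d})$ with $\int_{\R^{d}}|u|^{p}\,dx=1$, so that $d\mu:=|u|^{p}\,dx$ is a probability measure, and write $f:=|u|^{p}$ and $r:=p^{*}_{s}/p>1$. Jensen's inequality applied to the concave function $\log$ on $(\R^{d},\mu)$ gives
\begin{equation*}
    \log\int_{\R^{d}}|u|^{p^{*}_{s}}\,dx=\log\int_{\R^{d}}f^{\,r-1}\,d\mu\;\ge\;\int_{\R^{d}}\log\!\bigl(f^{\,r-1}\bigr)\,d\mu=(r-1)\int_{\R^{d}}|u|^{p}\log|u|^{p}\,dx,
\end{equation*}
i.e.\ $\int_{\R^{d}}|u|^{p}\log|u|^{p}\,dx\le\tfrac{p}{p^{*}_{s}-p}\log\int_{\R^{d}}|u|^{p^{*}_{s}}\,dx$ (the left-hand side is well defined in $[-\infty,\infty)$ since $u\in L^{p}\cap L^{p^{*}_{s}}$, and the claim is trivial if it equals $-\infty$). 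Inserting the Sobolev inequality through $\log\int|u|^{p^{*}_{s}}=\tfrac{p^{*}_{s}}{p}\log\bigl(\int|u|^{p^{*}_{s}}\bigr)^{p/p^{*}_{s}}$ and the monotonicity of $\log$, and then dividing by $p$, one would obtain
\begin{equation*}
    \int_{\R^{d}}|u(x)|^{p}\log|u(x)|\,dx\;\le\;\frac{p^{*}_{s}}{p\,(p^{*}_{s}-p)}\,\log\!\left(\mathcal{C}\int_{\R^{d}}\int_{\R^{d}}\frac{|u(x)-u(y)|^{p}}{|x-y|^{d+sp}}\,dx\,dy\right).
\end{equation*}
Since $p^{*}_{s}-p=\tfrac{sp^{2}}{d-sp}$, one has $\tfrac{p^{*}_{s}}{p\,(p^{*}_{s}-p)}=\tfrac{d}{sp^{2}}$, which is precisely the prefactor in \eqref{ineq in Theorem 1.1}. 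This would complete the proof modulo the Sobolev inequality above.

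\smallskip

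\textbf{The main obstacle.} All the work is therefore concentrated in that inequality. The embedding $W^{s,p}(\R^{d})\hookrightarrow L^{p^{*}_{s}}(\R^{d})$ is classical, but I would need the constant controlled \emph{uniformly} in $s\in(0,1)$ and displaying the three compensating factors: $(1-s)$, absorbing the divergence of the Gagliardo seminorm as $s\to1$ (Bourgain--Brezis--Mironescu regime); $s$, absorbing its divergence as $s\to0$ (Maz'ya--Shaposhnikova regime); and $(d-sp)^{-(p-1)}$, absorbing the blow-up of the Sobolev constant as $sp\to d$. Supplying this is exactly what the fractional Caffarelli--Kohn--Nirenberg type interpolation inequality of the paper is designed to do, and once it is available with the explicit constant the two displays above are routine. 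Letting $s\to1$ and using $(1-s)[u]_{W^{s,p}(\R^{d})}^{p}\to K(d,p)\,\|\nabla u\|_{L^{p}(\R^{d})}^{p}$ would then turn \eqref{ineq in Theorem 1.1} into a classical logarithmic Sobolev inequality of the form \eqref{log sob ineq in local} (with a non-sharp constant), as announced in the introduction.
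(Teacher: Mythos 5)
Your argument is correct and arrives at the same constant $\frac{d}{sp^{2}}$, but it is structurally more direct than the paper's proof, and it is worth recording what differs. The paper does not apply the fractional Sobolev inequality at the critical exponent $p^{*}_{s}$ directly; instead it first establishes an auxiliary interpolation inequality (Theorem~\ref{Theorem: CKN ineq without opt}) of the form $\|u\|_{L^{r}} \le \mathcal{C}^{a/p}[u]^{a}_{W^{s,p}}\|u\|^{1-a}_{L^{q}}$ for a one-parameter family $p<q<\tfrac{p(d-s)}{d-sp}$, $r=p\tfrac{q-1}{p-1}$, then combines this with a logarithmic interpolation lemma (Lemma~\ref{Lemma on log ineq}, proved by differentiating H\"older's inequality in the exponent), and finally passes to the limit $q\to p$ to recover the prefactor $\tfrac{sp}{d}$. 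You bypass all of this: you fix $q=p$ and $r=p^{*}_{s}$ from the start, replace Lemma~\ref{Lemma on log ineq} by the equivalent one-line Jensen inequality $\log\int f^{\,r-1}\,d\mu \ge (r-1)\int\log f\,d\mu$ with $d\mu=|u|^{p}\,dx$ a probability measure, and feed in the Sobolev inequality directly. This is a genuine simplification in the sense that no limiting argument and no auxiliary interpolation theorem are needed; the only input is the Maz'ya--Shaposhnikova constant (the paper's Lemma~\ref{Lemma: Frac Sob ineq}), exactly as you identify. What the paper's longer route buys is the interpolation inequality Theorem~\ref{Theorem: CKN ineq without opt} itself, which the author later revisits with an optimal constant in Theorem~\ref{Theorem: CKN with opt const}, so it has independent interest beyond Theorem~\ref{Theorem: Frac log Sob ineq}; but for the purposes of the logarithmic Sobolev inequality alone, your shorter derivation is complete and correct, and correctly handles the integrability subtlety (the left side lies in $[-\infty,\infty)$). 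One small remark on presentation: it would be worth noting explicitly that $\int f^{\,r-1}\,d\mu=\int|u|^{p^{*}_{s}}\,dx<\infty$ by the fractional Sobolev embedding, which is what licenses the application of Jensen, but this is only a matter of stating a hypothesis you have already tacitly used.
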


Bourgain, Brezis, and Mironescu \cite{Brezis2001} established a fundamental link between fractional and classical Sobolev norms (see also \cite[Theorem~3.19 and Corollary~3.20]{Edmunds2023}). Specifically, for $p>1$ and $u \in L^{p}(\mathbb{R}^{d})$, there exists a constant $K=K(d,p)>0$ such that
\begin{equation}\label{eqn s to 1}
    \lim_{s \to 1} (1-s) \int_{\mathbb{R}^{d}} \int_{\mathbb{R}^{d}} \frac{|u(x)-u(y)|^{p}}{|x-y|^{d+sp}} \, dx \, dy = K \int_{\mathbb{R}^{d}} |\nabla u(x)|^{p} \, dx.
\end{equation}
Using this relation, and letting $s \to 1$ in inequality \eqref{ineq in Theorem 1.1} of Theorem~\ref{Theorem: Frac log Sob ineq}, we recover the classical Sobolev logarithmic inequality. In particular, we obtain
\begin{equation*}
    \int_{\mathbb{R}^{d}} |u(x)|^{p} \log  |u(x)|   \, dx \leq \frac{d}{p^{2}} \log \left( C(d,p) \frac{K}{(d-p)^{p-1}} \int_{\mathbb{R}^{d}} |\nabla u(x)|^{p} \, dx \right).
\end{equation*}
del Pino and Dolbeault \cite{Manuel2003} derived the same inequality with the sharp constant $C_{p}$ given in \eqref{best const C_p}. Consequently, we deduce the bound
\begin{equation*}
    C_{p} \leq C(d,p) \frac{K}{(d-p)^{p-1}}.
\end{equation*}

A version of the above theorem on stratified Lie groups was proved by Ghosh, Kumar, and Ruzhansky in \cite[Section $7$]{ghosh2023}. They established subelliptic fractional logarithmic Sobolev inequalities on stratified Lie groups with explicit constants depending on an additional parameter lying between $p$ and the fractional critical Sobolev exponent, under the condition $sp<d$. In contrast, Theorem \ref{Theorem: Frac log Sob ineq} provides a constant $\mathcal{C}$ in Euclidean space that is independent of any extra parameter and also recovers the classical Sobolev logarithmic inequality.

\smallskip

Lieb and Loss, in \cite{Lieb2001}, proved another version of the Sobolev logarithmic inequality. It says that for any $c>0$, we have
\begin{equation}
    \int_{\mathbb{R}^{d}} |u(x)|^{2} \log \left( \frac{|u(x)|^{2}}{\| u\|^{2}_{L^{2}(\mathbb{R}^{d})}} \right) \, dx + d(1+ \log c) \| u \|^{2}_{L^{2}(\mathbb{R}^{d})} \leq \frac{c^{2}}{\pi} \int_{\mathbb{R}^{d}} | \nabla u(x)|^{2} \, dx. 
\end{equation}
This inequality becomes an equality only when $u$ is a constant multiple of the Gaussian function $\exp \left(- \frac{\pi |x|^{2}}{2c^{2}} \right)$. Later, Chatzakou and Ruzhansky, in \cite{Ruzhansky2024}, proved a fractional version of this inequality. They showed that if $u \in W^{s,2}(\mathbb{R}^{d})$ with $0<s< \frac{n}{2}$ and $c>0$, then
\begin{align*}
    \int_{\mathbb{R}^{d}} |u(x)|^{2} \log \left( \frac{|u(x)|^{2}}{\| u\|^{2}_{L^{2}(\mathbb{R}^{d})}} \right) \, dx + \frac{d}{s}(1+ \log c)  \| u \|^{2}_{L^{2}(\mathbb{R}^{d})} \leq \frac{d e c^{2}}{2s} C_{d,s}  \| (- \Delta u)^{\frac{s}{2}} u \|^{2}_{L^{2}(\mathbb{R}^{d})},
\end{align*}
where 
\begin{equation*}
    C_{d,s} = \frac{\Gamma \left( \frac{d-2s}{2} \right)}{2^{2s} \pi^{s}\Gamma \left( \frac{d+2s}{2} \right) } \left( \frac{\Gamma \left( d \right)}{\Gamma \left( \frac{d}{2} \right)}  \right)^{\frac{2s}{d}}
\end{equation*}
and 
\begin{equation*}
    \| (- \Delta u)^{\frac{s}{2}} u \|^{2}_{L^{2}(\mathbb{R}^{d})} = \int_{\mathbb{R}^{d}} |\widehat{u}(\xi)|^{2} (4 \pi^{2} |\xi|^{2})^{s} d \xi.
\end{equation*}
Here, $\widehat{u}$ denotes the Fourier transform of $u$. 

\smallskip

In the next theorem, we extend the fractional Sobolev logarithmic inequality established in \cite{Ruzhansky2024} to the case $sp<d$, with an explicit constant. Moreover, by passing to the limit $s \to 1$, we recover the another form of classical Sobolev logarithmic inequality.

\begin{theorem}\label{Theorem:2 Frac log Sob ineq}
    Let $p>1$ and $s \in (0,1)$ be such that $sp<d$. Then for any $u \in W^{s,p}(\mathbb{R}^{d})$ and any real number $c>0$, the following inequality holds
    \begin{align}
     \int_{\mathbb{R}^{d}} |u(x)|^{p} \log \left( \frac{|u(x)|^{p}}{\|u\|^{p}_{L^{p}(\mathbb{R}^{d})}} \right)  \, dx \, + \, & \frac{d}{s} (1+ \log c) \|u\|^{p}_{L^{p}(\mathbb{R}^{d})} \nonumber \\ & \leq \frac{ d e^{p-1} c^{p} \, \mathcal{C}}{sp}  \int_{\mathbb{R}^{d}} \int_{\mathbb{R}^{d}}  \frac{|u(x)-u(y)|^{p}}{|x-y|^{d+sp}} \, dx \, dy  ,
\end{align}
where $\mathcal{C}$ is as defined in \eqref{Defn mathcal C}.
\end{theorem}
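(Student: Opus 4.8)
The plan is to deduce Theorem~\ref{Theorem:2 Frac log Sob ineq} from Theorem~\ref{Theorem: Frac log Sob ineq} by a standard scaling-and-normalization argument, exactly mirroring how the Lieb--Loss form of the logarithmic Sobolev inequality is obtained from the del Pino--Dolbeault form. First I would reduce to the normalized case: given an arbitrary nonzero $u \in W^{s,p}(\mathbb{R}^d)$, set $v = u / \|u\|_{L^p(\mathbb{R}^d)}$, so that $\int_{\mathbb{R}^d} |v|^p \, dx = 1$. Applying Theorem~\ref{Theorem: Frac log Sob ineq} to $v$ gives
\begin{equation*}
  \int_{\mathbb{R}^d} |v(x)|^p \log |v(x)| \, dx \leq \frac{d}{sp^2} \log\left( \mathcal{C} [v]_{W^{s,p}(\mathbb{R}^d)}^p \right),
\end{equation*}
and multiplying by $p$ and rewriting $p\log|v| = \log(|v|^p)$ turns the left side into $\int |v|^p \log(|v|^p)\,dx = \int \frac{|u|^p}{\|u\|_p^p}\log\!\big(\frac{|u|^p}{\|u\|_p^p}\big)\,dx$.

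Next I would introduce the free parameter $c>0$. The key elementary inequality is $\log t \le \frac{t}{ea} + \log a - 1$ for all $t,a>0$ (concavity of $\log$, i.e. the tangent line to $\log$ at $t=ea$ lies above it); equivalently $\log t \leq \lambda t - \log \lambda - 1$ for any $\lambda > 0$. Applying this with $t = \mathcal{C}\,[v]_{W^{s,p}}^p$ and choosing the parameter $\lambda$ proportional to $c^{-p}$ (the precise proportionality constant dictated by $\mathcal{C}$ and the desired shape $\frac{de^{p-1}c^p\mathcal{C}}{sp}$) converts the logarithm on the right-hand side into a term linear in $[v]_{W^{s,p}}^p$ plus a constant depending on $c$. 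Concretely, $\frac{d}{sp^2}\log\big(\mathcal{C}[v]^p\big) \le \frac{d}{sp^2}\big(\lambda \mathcal{C}[v]^p - \log\lambda - 1\big)$, and I would pick $\lambda$ so that $\frac{d}{sp^2}\lambda\mathcal{C} = \frac{de^{p-1}c^p\mathcal{C}}{sp}$, i.e. $\lambda = p e^{p-1} c^p$; then the constant term $-\frac{d}{sp^2}(\log\lambda + 1) = -\frac{d}{sp^2}\big((p-1) + \log p + p\log c + 1\big) = -\frac{d}{sp^2}\big(p + \log p + p \log c\big)$, which after multiplying through by $p$ (to match the $\log(|u|^p)$ normalization) produces $-\frac{d}{sp}\big(p + \log p + p\log c\big)$. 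Moving the $c$-dependent part to the left gives exactly the term $\frac{d}{s}(1+\log c)\|u\|_p^p$ (after multiplying back by $\|u\|_p^p$ for the homogeneous statement), with the remaining $p$-dependent constant $-\frac{d}{sp}(\log p + \ldots)$ absorbable — here I would double-check whether the stated inequality has implicitly folded such a constant into $\mathcal{C}$ or whether a slightly different normalization of $\lambda$ makes it vanish.

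Finally I would undo the normalization: since both sides of the homogeneous inequality scale the same way under $u \mapsto \mu u$ (the left side's first term is $p$-homogeneous in $u$ modulo the ratio, the $\|u\|_p^p$ terms are manifestly $p$-homogeneous, and $[u]_{W^{s,p}}^p$ is $p$-homogeneous), replacing $v$ by $u/\|u\|_p$ throughout and multiplying by $\|u\|_p^p$ recovers the claimed form for general $u$. The main obstacle — really the only non-routine point — is bookkeeping the constants: making sure the elementary bound $\log t \le \lambda t - \log\lambda - 1$ is applied with the unique choice of $\lambda$ that simultaneously produces the coefficient $\frac{de^{p-1}c^p\mathcal{C}}{sp}$ on the Gagliardo seminorm and the coefficient $\frac{d}{s}(1+\log c)$ on $\|u\|_p^p$, and verifying that the leftover purely $(d,p,s)$-dependent constant is either zero or of the correct sign to be dropped (or has been incorporated into the definition of $\mathcal{C}$ referenced as \eqref{Defn mathcal C}). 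I would also note that the hypothesis $sp<d$ is inherited directly from Theorem~\ref{Theorem: Frac log Sob ineq} and needs no separate treatment, and that the case $u \equiv 0$ is trivial. No new analytic input is required beyond Theorem~\ref{Theorem: Frac log Sob ineq} and the scalar inequality for $\log$.
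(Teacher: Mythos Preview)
Your approach is exactly the paper's: start from Theorem~\ref{Theorem: Frac log Sob ineq} (the paper uses the equivalent non-normalized inequality \eqref{ineq taking log}), apply the scalar bound $\log x \le bx - \log b - 1$, and unpack. Your only slip is in the constant bookkeeping: once you have multiplied by $p$ the right-hand side carries the coefficient $\tfrac{d}{sp}$, not $\tfrac{d}{sp^2}$, so the correct choice is $\lambda = e^{p-1}c^{p}$ (not $pe^{p-1}c^{p}$); with this choice the constant term becomes $-\tfrac{d}{sp}\big((p-1)+p\log c +1\big) = -\tfrac{d}{s}(1+\log c)$ and there is no leftover to absorb.
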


By applying the relation \eqref{eqn s to 1} in the above theorem, we recover the local case. In particular, taking the limit $s \to 1$ in the above theorem, we obtain the following another form of the Sobolev logarithmic inequality in the local setting:
\begin{align}
     \int_{\mathbb{R}^{d}} |u(x)|^{p} \log \left( \frac{|u(x)|^{p}}{\|u\|^{p}_{L^{p}(\mathbb{R}^{d})}} \right)  \, dx \, + \, & d (1+ \log c) \|u\|^{p}_{L^{p}(\mathbb{R}^{d})} \nonumber \\ & \leq \frac{ d e^{p-1} c^{p} \, K}{p} \left( \frac{C(d,p)}{(d-p)^{p-1}} \right)  \int_{\mathbb{R}^{d}} |\nabla u(x)|^{p} \, dx.
\end{align}

\smallskip

The next theorem establishes a fractional Sobolev logarithmic inequality in the weighted fractional Sobolev space. 
We consider the space $W^{s,p,\alpha}(\mathbb{R}^{d})$ as the closure of $C^{1}_{c}(\mathbb{R}^{d})$ with respect to the weighted norm 
$\|\cdot\|_{W^{s,p,\alpha}(\mathbb{R}^{d})}$ (see \eqref{Weighted norm definition} for the precise definition). 
Here $\alpha, \alpha_{1}, \alpha_{2} \in \mathbb{R}$ are parameters with $\alpha = \alpha_{1}+\alpha_{2}$ together with certain admissible bounds. 
The main result is stated as follows:

\begin{theorem}\label{Theorem: Weighted Frac log inequality 1}
    Let $p>1$, $s \in (0,1)$, and $\alpha_{1}, \alpha_{2} \in \mathbb{R}$ be such that $\alpha= \alpha_{1}+ \alpha_{2}$, $\alpha_{1}p, \, \alpha_{2}p \in (-d, sp)$, $ 
      0 \leq \alpha p < sp$, and $
       sp-\alpha p < d$. Then for every $u \in W^{s,p, \alpha}(\mathbb{R}^{d})$ satisfying $\int_{\mathbb{R}^{d}} |u(x)|^{p} \, dx =1$, the following weighted fractional Sobolev logarithmic inequality holds
    \begin{equation}
         \int_{\mathbb{R}^{d}} |u(x)|^{p} \log  |u(x)|   \, dx \leq \frac{d}{(s-\alpha)p^{2}} \log \left( \mathcal{C}_{\alpha} \int_{\mathbb{R}^{d}} \int_{\mathbb{R}^{d}}  \frac{|u(x)-u(y)|^{p}}{|x-y|^{d+sp}} |x|^{\alpha_{1} p} |y|^{\alpha_{2} p} \, dx \, dy \right),
    \end{equation}
    where $\mathcal{C}_{\alpha} = \mathcal{C}_{\alpha}(d,p,s,\alpha_{1}, \alpha_{2})>0$.
\end{theorem}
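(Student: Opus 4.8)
The plan is to deduce the stated logarithmic inequality from a weighted fractional Sobolev inequality of Caffarelli--Kohn--Nirenberg type, exactly as in the proof of Theorem~\ref{Theorem: Frac log Sob ineq}, and then to convert the resulting $L^{q}$-bound into a logarithmic bound via Jensen's inequality applied to the concave function $\log$ on the probability measure $|u|^{p}\,dx$. Write
\[
[u]_{\alpha}^{p}:=\int_{\mathbb{R}^{d}}\int_{\mathbb{R}^{d}}\frac{|u(x)-u(y)|^{p}}{|x-y|^{d+sp}}\,|x|^{\alpha_{1}p}\,|y|^{\alpha_{2}p}\,dx\,dy ,
\qquad
q:=\frac{dp}{d-(s-\alpha)p}.
\]
First I would check that the admissibility hypotheses make $q$ legitimate: since $0\le\alpha p<sp$ we have $0<s-\alpha\le s$, hence $q>p$; and since $sp-\alpha p<d$ the denominator $d-(s-\alpha)p$ is positive, so $q\in(p,\infty)$. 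A one-parameter scaling check ($u\mapsto u(\lambda\,\cdot)$) identifies $q$ as the unique exponent matching homogeneities: under this dilation $[u(\lambda\,\cdot)]_{\alpha}^{p}$ scales like $\lambda^{-d+(s-\alpha)p}$ while $\|u(\lambda\,\cdot)\|_{L^{q}}^{p}$ scales like $\lambda^{-dp/q}$, and these agree precisely when $q=\tfrac{dp}{d-(s-\alpha)p}$; morally the weight of total power $\alpha p$ lowers the effective smoothness from $s$ to $s-\alpha$, which is also why the final prefactor is $\tfrac{d}{(s-\alpha)p^{2}}$ rather than $\tfrac{d}{sp^{2}}$.

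The core analytic input is the weighted fractional Sobolev / CKN inequality
\[
\|u\|_{L^{q}(\mathbb{R}^{d})}^{p}\ \le\ \mathcal{C}_{\alpha}(d,p,s,\alpha_{1},\alpha_{2})\,[u]_{\alpha}^{p},\qquad u\in W^{s,p,\alpha}(\mathbb{R}^{d}),
\]
with a positive finite constant; this is the weighted avatar of the interpolation inequality on which the paper's method rests, and its validity is exactly what the conditions $\alpha_{1}p,\alpha_{2}p\in(-d,sp)$, $0\le\alpha p<sp$, $sp-\alpha p<d$ are designed to guarantee. Granting it, the proof is immediate: for $u$ with $\int_{\mathbb{R}^{d}}|u|^{p}\,dx=1$ the measure $|u(x)|^{p}\,dx$ is a probability measure, so Jensen's inequality gives
\[
(q-p)\int_{\mathbb{R}^{d}}|u|^{p}\log|u|\,dx=\int_{\mathbb{R}^{d}}|u|^{p}\log\bigl(|u|^{q-p}\bigr)\,dx\ \le\ \log\!\int_{\mathbb{R}^{d}}|u|^{q-p}\,|u|^{p}\,dx\ =\ \log\|u\|_{L^{q}}^{q}.
\]
Since $q-p=\tfrac{(s-\alpha)p^{2}}{d-(s-\alpha)p}>0$ and $\tfrac{q}{q-p}=\tfrac{d}{(s-\alpha)p}$, dividing and inserting the weighted Sobolev inequality in the form $\|u\|_{L^{q}}^{p}\le\mathcal{C}_{\alpha}[u]_{\alpha}^{p}$ yields
\[
\int_{\mathbb{R}^{d}}|u|^{p}\log|u|\,dx\ \le\ \frac{1}{q-p}\log\|u\|_{L^{q}}^{q}=\frac{q}{p(q-p)}\log\|u\|_{L^{q}}^{p}\ \le\ \frac{d}{(s-\alpha)p^{2}}\log\bigl(\mathcal{C}_{\alpha}\,[u]_{\alpha}^{p}\bigr),
\]
which is the assertion. (The positive part of $|u|^{p}\log|u|$ is integrable because $u\in L^{q}$; if the integral is $-\infty$ the inequality is trivial.)

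Thus everything reduces to the weighted fractional Sobolev inequality, and proving that inequality on exactly the stated admissibility range is where the real work lies — this I expect to be the main obstacle. The difficulty is the asymmetry of the weight $|x|^{\alpha_{1}p}|y|^{\alpha_{2}p}$, which prevents a direct appeal to the symmetric weighted fractional Sobolev/CKN theory. I would handle it by splitting the double integral into a near-diagonal region $|x-y|\le\tfrac12|x|$, where $|x|\simeq|y|$ so that $|x|^{\alpha_{1}p}|y|^{\alpha_{2}p}\simeq|x|^{\alpha p}$ and one is reduced to a symmetric-weight CKN estimate of total weight $\alpha p$ (admissible because $0\le\alpha p<sp$ and $(s-\alpha)p<d$), and a far region, controlled by Hardy/Hölder-type estimates that use the individual bounds $\alpha_{1}p,\alpha_{2}p\in(-d,sp)$ to secure integrability of $|x|^{\alpha_{i}p}$ against the kernel. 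Tracking constants through this decomposition produces the explicit $\mathcal{C}_{\alpha}$. An equivalent route, if the earlier fractional interpolation inequality in the paper is already stated with weights, is to invoke it directly, in which case this theorem becomes a pure corollary of the Jensen reduction above and only the exponent arithmetic needs to be checked, as done here.
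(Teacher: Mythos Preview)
Your proof is correct, and it reaches the same conclusion with the same constant $\mathcal{C}_{\alpha}$ as the paper, but by a genuinely shorter route. Both arguments rest on the weighted fractional Sobolev inequality $\|u\|_{L^{p^{*}_{s,\alpha}}}^{p}\le \mathcal{C}_{\alpha}[u]_{\alpha}^{p}$ (Lemma~\ref{Weighted fractional Sobolev inequality} in the paper, obtained by specializing the Nguyen--Squassina CKN inequality), so your final paragraph about proving that inequality via a near-diagonal/far decomposition is unnecessary here --- the paper simply quotes it. The divergence is in how the Sobolev bound is turned into a logarithmic one. The paper follows its unweighted template: it first interpolates to get $\|u\|_{L^{r}}\le \mathcal{C}_{\alpha}^{a/p}[u]_{\alpha}^{a}\|u\|_{L^{q}}^{1-a}$ (Theorem~\ref{Theorem: Weighted CKN ineq without opt}), combines this with the del~Pino--Dolbeault lemma (Lemma~\ref{Lemma on log ineq}), and then lets $q\to p$. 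You instead apply Jensen directly at the critical exponent $q=p^{*}_{s,\alpha}$, which eliminates both the auxiliary interpolation and the limiting step; the exponent arithmetic $\tfrac{q}{p(q-p)}=\tfrac{d}{(s-\alpha)p^{2}}$ you record is exactly what the paper's limit $q\to p$ produces. The paper's longer mechanism is the one that, in the local setting, is known to access the sharp constant (this is the point of del~Pino--Dolbeault), whereas the Jensen shortcut is precisely the ``non-sharp'' derivation mentioned in the introduction; since Theorem~\ref{Theorem: Weighted Frac log inequality 1} claims no sharpness, your approach loses nothing and is cleaner.
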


\smallskip



\smallskip

The key step in proving the above three theorems is to establish an interpolation inequality, or in other words, a fractional Caffarelli–Kohn–Nirenberg type inequality. In Section \ref{Proof of Frac log Sob ineq} and Section \ref{Proof of Weighted Frac log Sob ineq}, we will derive such an interpolation inequality with a constant. In the next theorem, we state an interpolation inequality in the fractional Sobolev space with an optimal constant $\mathscr{L}$. This optimal constant $\mathscr{L}$ is connected to the infimum of a variational problem, which we will discuss in Section \ref{Proof of CKN with opt}. For the literature on Caffarelli–Kohn–Nirenberg type inequalities, we refer to \cite{Weiwei2022, Caffarelli1984, Lu2021, Squassina2018}.
For $s \in (0,1)$, $p > 1$, and $q > p$, we define the space $\mathcal{D}^{p,q}_{s}$ as
\begin{equation}
   \mathcal{D}^{p,q}_{s} = (W^{s,p}(\mathbb{R}^{d}) \cap L^{q}(\mathbb{R}^{d})) \setminus \{ 0\}.
\end{equation}
The next interpolation theorem is stated as follows.

\begin{theorem}\label{Theorem: CKN with opt const}
Let $p>1$ and $s \in (0,1)$ be such that $sp<d$, $p<q<\frac{p(d-s)}{d-sp}$, and define $r = p \, \frac{q-1}{p-1}$. Then for every $u \in \mathcal{D}^{p,q}_{s}$, the following interpolation inequality holds with an optimal constant $\mathscr{L}$,
\begin{equation}
    \|u\|_{L^{r}(\mathbb{R}^{d})} \leq \mathscr{L} \, [u]^{a}_{W^{s,p}(\mathbb{R}^{d})} 
    \, \|u\|^{1-a}_{L^{q}(\mathbb{R}^{d})},
\end{equation}
where
\begin{equation}
    a = \frac{d(q-p)}{(q-1)\bigl(dp - (d-sp)q \bigr)},
\end{equation}
and $\delta := dp - (d-sp)q > 0$.
\end{theorem}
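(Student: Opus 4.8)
The plan is to derive the stated interpolation inequality from the fractional Sobolev embedding together with a single application of Hölder's inequality, and then to identify the best constant $\mathscr{L}$ with the reciprocal of an explicit variational infimum; the finer analysis of that infimum is what Section~\ref{Proof of CKN with opt} is devoted to.

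First I would dispose of the exponent bookkeeping. Since $q>p>1$, a one-line computation gives $r-q=\frac{q-p}{p-1}>0$, so $r>q$; the hypothesis $q<\frac{p(d-s)}{d-sp}$ is equivalent to $r<p^{\ast}_{s}:=\frac{dp}{d-sp}$ (the fractional critical Sobolev exponent), and since $\frac{p(d-s)}{d-sp}<p^{\ast}_{s}$ it also forces $q<p^{\ast}_{s}$, hence $\delta=dp-(d-sp)q>0$ and $a\in(0,1)$. Thus $r$ lies strictly between $q$ and $p^{\ast}_{s}$, so there is a unique $\theta\in(0,1)$ with $\tfrac1r=\tfrac{\theta}{q}+\tfrac{1-\theta}{p^{\ast}_{s}}$; solving this scalar identity yields $1-\theta=\tfrac{d(q-p)}{(q-1)(dp-(d-sp)q)}=a$. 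I would record this short computation, together with $a\in(0,1)$, as a preliminary lemma.

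Next, for $u\in\mathcal{D}^{p,q}_{s}$ one has $u\in L^{q}(\mathbb{R}^{d})$ by definition and $u\in L^{p^{\ast}_{s}}(\mathbb{R}^{d})$ by the fractional Sobolev inequality $\|u\|_{L^{p^{\ast}_{s}}(\mathbb{R}^{d})}\le S_{d,s,p}\,[u]_{W^{s,p}(\mathbb{R}^{d})}$ (equivalently, this is already contained in the interpolation estimate with an explicit constant produced in Sections~\ref{Proof of Frac log Sob ineq}--\ref{Proof of Weighted Frac log Sob ineq}). The interpolation form of Hölder's inequality then gives
\[
\|u\|_{L^{r}(\mathbb{R}^{d})}\le\|u\|_{L^{q}(\mathbb{R}^{d})}^{\theta}\,\|u\|_{L^{p^{\ast}_{s}}(\mathbb{R}^{d})}^{1-\theta}\le S_{d,s,p}^{\,a}\,[u]_{W^{s,p}(\mathbb{R}^{d})}^{\,a}\,\|u\|_{L^{q}(\mathbb{R}^{d})}^{1-a},
\]
using $1-\theta=a$, so the inequality of the theorem holds with a finite constant. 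Consequently
\[
\mathscr{L}:=\sup_{u\in\mathcal{D}^{p,q}_{s}}\frac{\|u\|_{L^{r}(\mathbb{R}^{d})}}{[u]^{a}_{W^{s,p}(\mathbb{R}^{d})}\,\|u\|^{1-a}_{L^{q}(\mathbb{R}^{d})}}
\]
is finite and positive (bound it below by the quotient evaluated at any single admissible $u$), and by construction it is the smallest constant for which the stated inequality holds; rewriting the inequality as a lower bound for the quotient, one reads off
\[
\mathscr{L}^{-1}=\inf_{u\in\mathcal{D}^{p,q}_{s}}\frac{[u]^{a}_{W^{s,p}(\mathbb{R}^{d})}\,\|u\|^{1-a}_{L^{q}(\mathbb{R}^{d})}}{\|u\|_{L^{r}(\mathbb{R}^{d})}},
\]
which is the variational problem referred to in the statement and, by scaling homogeneity, may be posed over $\{u:\|u\|_{L^{r}(\mathbb{R}^{d})}=1\}$.

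I do not expect the proof of the inequality as formulated to hide a serious obstacle: the only points needing care are the verification that the exponent identities above are consistent with $\delta>0$ and $a\in(0,1)$, and the invocation of the fractional Sobolev inequality in a form valid on all of $W^{s,p}(\mathbb{R}^{d})$ (so that the estimate genuinely holds on $\mathcal{D}^{p,q}_{s}$, where only $u\equiv0$ is excluded and there all three norms vanish). The substantive work, carried out in Section~\ref{Proof of CKN with opt}, is to show that the infimum defining $\mathscr{L}^{-1}$ is \emph{attained}, i.e.\ that an extremal function exists; this is where the loss of compactness caused by the scaling and translation invariances of the functional on $\mathbb{R}^{d}$ must be confronted, typically through a concentration--compactness argument or by reducing to radially symmetric decreasing competitors after normalizing those invariances.
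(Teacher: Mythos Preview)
Your derivation of the inequality itself is correct and coincides with what the paper does in Theorem~\ref{Theorem: CKN ineq without opt}: H\"older between $L^{q}$ and $L^{p^{*}_{s}}$, followed by the fractional Sobolev inequality, with the same verification that $q<r<p^{*}_{s}$ and $1-\theta=a$. Your identification of $\mathscr{L}$ as the supremum of the quotient (equivalently, $\mathscr{L}^{-1}$ as the infimum of the reciprocal) is also sound and is, of course, the tautological variational characterization of a best constant.

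Where your proposal diverges from the paper is in what you expect Section~\ref{Proof of CKN with opt} to contain. The paper does \emph{not} prove that the infimum is attained; there is no concentration--compactness argument and no claim of an extremal function. Instead, the paper relates $\mathscr{L}$ to a \emph{different} variational quantity: the infimum $\eta$ of the additive functional
\[
I(u)=\tfrac{1}{p}[u]^{p}_{W^{s,p}(\mathbb{R}^{d})}+\tfrac{1}{q}\|u\|^{q}_{L^{q}(\mathbb{R}^{d})}
\]
over the constraint $\{\|u\|^{r}_{L^{r}}=K\}$. The point is that this constraint is invariant under the scaling $u_{\lambda}(x)=\lambda^{\alpha}u(\lambda x)$ with $\alpha=\tfrac{d(p-1)}{p(q-1)}$ (so that $\alpha r=d$), while the two terms in $I(u)$ scale with opposite-sign exponents. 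Optimizing $I(u_{\lambda})$ over $\lambda>0$ via the elementary one-variable lemma $\inf_{t>0}(t^{a}M+t^{-b}N)=\tfrac{a+b}{a}(b/a)^{-b/(a+b)}M^{b/(a+b)}N^{a/(a+b)}$ converts the additive functional into the multiplicative product $[u]^{\cdots}_{W^{s,p}}\|u\|^{\cdots}_{L^{q}}$, and a normalization in $K$ then yields $\mathscr{L}=\eta^{-\frac{\alpha(p-q)+sp}{d(p-q)+spq}}$. So the substance of Section~\ref{Proof of CKN with opt} is this additive-to-multiplicative reformulation, not an existence-of-minimizers result; your closing paragraph misidentifies the target.
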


\smallskip

This article is organized as follows. In Section \ref{Proof of Frac log Sob ineq}, we prove Theorems \ref{Theorem: Frac log Sob ineq} and \ref{Theorem:2 Frac log Sob ineq}, and establish an interpolation inequality that plays a central role in these proofs. Section \ref{Proof of Weighted Frac log Sob ineq} is devoted to extending logarithmic Sobolev inequalities to weighted fractional Sobolev spaces, where we prove Theorem \ref{Theorem: Weighted Frac log inequality 1}. Finally, in Section \ref{Proof of CKN with opt}, we relate the optimal constant in the interpolation inequality to the infimum of a corresponding variational problem.

\section{Proof of fractional Sobolev logarithmic inequalities}\label{Proof of Frac log Sob ineq} 

In this section, we prove Theorem \ref{Theorem: Frac log Sob ineq} and Theorem \ref{Theorem:2 Frac log Sob ineq} by establishing the fractional Sobolev logarithmic inequalities for the case $sp<d$. The main tool is the fractional Sobolev inequality with an explicit constant, given by Maz’ya and Shaposhnikova in \cite[Theorem 1]{Mazya2002}. The constant $\mathcal{C}$ in Theorem \ref{Theorem: Frac log Sob ineq} is the same as the one in \cite[Theorem 1]{Mazya2002}.
Using this fractional Sobolev inequality, we derive an interpolation inequality with the constant $\mathcal{C}^{\frac{a}{p}}$. However, this constant $\mathcal{C}^{\frac{a}{p}}$ is not sharp. In Section \ref{Proof of CKN with opt}, we will show the relation between the best constant in the interpolation inequality and a corresponding variational problem.

\smallskip

Next, we recall the fractional Sobolev inequality in the case $sp < d$, where $p^{*}_{s} := \frac{dp}{d-sp}$. This result is also due to Maz’ya and Shaposhnikova \cite[Theorem 1]{Mazya2002}. For the case $p=2$, the optimal constant in fractional Sobolev inequality was established by Cotsiolis and Tavoularis in \cite{Cotsiolis20024}.

\begin{lemma}\label{Lemma: Frac Sob ineq}
    Let $d \geq 1$, $p \geq 1$ and $s \in (0,1)$ such that $sp<d$. Then for any $u \in W^{s,p}(\mathbb{R}^{d})$, there holds
    \begin{equation}
       \left( \int_{\mathbb{R}^{d}} |u(x)|^{p^{*}_{s}} \, dx \right)^{\frac{1}{p^{*}_{s}}} \leq \mathcal{C}^{\frac{1}{p}} \left( \int_{\mathbb{R}^{d}} \int_{\mathbb{R}^{d}} \frac{|u(x)-u(y)|^{p}}{|x-y|^{d+sp}} \, dx \, dy \right)^{\frac{1}{p}},
    \end{equation}
    where $p^{*}_{s} = \frac{dp}{d-sp}$, and
        \begin{equation}\label{Defn mathcal C}
        \mathcal{C}= C(d,p) \left( \frac{s(1-s)}{(d-sp)^{p-1}} \right),
    \end{equation}
     with $C(d,p)>0$ depending only on $d$ and $p$.
\end{lemma}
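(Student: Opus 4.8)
The statement is precisely the sharp fractional Sobolev inequality of Maz'ya and Shaposhnikova \cite[Theorem~1]{Mazya2002}, whose entire purpose is to exhibit the constant with its explicit dependence on $s$, $p$ and $d$ through the factor $\frac{s(1-s)}{(d-sp)^{p-1}}$; thus the plan is essentially to invoke that theorem and to check that the constant appearing there is of the form \eqref{Defn mathcal C}. To keep the lemma self-contained one first reduces, by density of $C_c^\infty(\mathbb{R}^d)$ in $W^{s,p}(\mathbb{R}^d)$ together with Fatou's lemma, to the case $u \in C_c^\infty(\mathbb{R}^d)$, for which the Gagliardo seminorm is finite and all the manipulations below are justified.

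The mechanism behind \cite{Mazya2002} that I would follow is the classical route to Sobolev embeddings adapted to the nonlocal setting: for a ball $B = B(x_0,r)$ one establishes a local oscillation estimate of the form
\begin{equation*}
\frac{1}{|B|}\int_B |u(y) - u_B|^p \, dy \;\leq\; C(d,p)\, r^{sp}\,\frac{1}{|B|}\int_B\!\int_B \frac{|u(y)-u(z)|^p}{|y-z|^{d+sp}}\, dy\, dz,
\end{equation*}
then writes $u$ as a telescoping sum of averages over a dyadic family of concentric balls, and finally runs a covering/rearrangement argument to pass from these local controls to the global $L^{p^{*}_{s}}$ bound, optimizing over the radius. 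In tracking constants, the product $s(1-s)$ enters through the convergence at both ends of scale-integrals of the type $\int \rho^{sp-1}\,d\rho$ and $\int \rho^{-sp-1}\,d\rho$ truncated appropriately, while the factor $(d-sp)^{1-p}$ records the degeneration of the embedding as $sp \uparrow d$, i.e. the loss in the weak-type estimate for the Riesz-type operator that converts the $W^{s,p}$ energy into an $L^{p^{*}_{s}}$ norm; all remaining dimensional and $p$-dependent quantities are absorbed into $C(d,p)$.

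The main obstacle — and the reason the lemma is not a routine corollary of the Sobolev embedding in the Hitchhiker's guide \cite{Hitchiker2012} — is precisely this quantitative control of the constant at the two endpoints $s\to 0^{+}$, $s\to 1^{-}$ and at the critical threshold $sp\to d^{-}$: a careless argument produces an anonymous $C(d,s,p)$ that blows up as $s\to 1$, which would make the derived logarithmic inequality vacuous in the limit, whereas the $(1-s)$ factor is forced by the Bourgain--Brezis--Mironescu asymptotics \eqref{eqn s to 1}. Since this is exactly the content of \cite[Theorem~1]{Mazya2002}, I would simply cite it, noting in addition that for $p=2$ the constant can be replaced by the sharp one of Cotsiolis and Tavoularis \cite{Cotsiolis20024}.
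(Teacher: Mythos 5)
Your proposal is correct and matches the paper's own treatment: the paper likewise does not reprove the inequality but simply invokes Maz'ya and Shaposhnikova \cite[Theorem~1]{Mazya2002}, from which the constant $\mathcal{C}=C(d,p)\,\frac{s(1-s)}{(d-sp)^{p-1}}$ is taken verbatim. Your additional sketch of the oscillation/telescoping mechanism and the role of the $s(1-s)$ and $(d-sp)^{1-p}$ factors is accurate background, but it is not needed since both you and the paper ultimately rely on the citation.
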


\smallskip

Now we derive an interpolation inequality in the fractional Sobolev space. For this, we make use of the above lemma. This result is an important step towards proving Theorem \ref{Theorem: Frac log Sob ineq} and Theorem \ref{Theorem:2 Frac log Sob ineq}.

\begin{theorem}\label{Theorem: CKN ineq without opt}
    Let $p>1$ and $s \in (0,1)$ be such that $sp<d$, $p<q < \tfrac{p(d-s)}{d-sp}$, and $r=p \tfrac{q-1}{p-1}$. Then for all $u \in \mathcal{D}^{p,q}_{s}$,
    \begin{equation}
        \|u\|_{L^{r}(\mathbb{R}^{d})} \leq \mathcal{C}^{\frac{a}{p}} [u]^{a}_{W^{s,p}(\mathbb{R}^{d})} \|u\|_{L^{q}(\mathbb{R}^{d})}^{1-a},
    \end{equation}
    where $\mathcal{C}$ is as defined in \eqref{Defn mathcal C}, and
    \begin{equation}
        a= \frac{d(q-p)}{(q-1)(dp-(d-sp)q)},
    \end{equation}
   with $\delta = dp - (d-sp)q > 0$.
\end{theorem}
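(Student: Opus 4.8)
The plan is to deduce Theorem \ref{Theorem: CKN ineq without opt} from the fractional Sobolev inequality of Lemma \ref{Lemma: Frac Sob ineq} by a single application of H\"older's inequality, exactly the way classical Gagliardo--Nirenberg interpolation is obtained from Sobolev's inequality. First I would record the exponent bookkeeping: set $p^{*}_{s} = \frac{dp}{d-sp}$, and note that the hypothesis $p < q < \frac{p(d-s)}{d-sp}$ together with $r = p\frac{q-1}{p-1}$ is designed precisely so that $q < r < p^{*}_{s}$. The inequality $r > q$ is immediate since $q > p$ gives $\frac{q-1}{p-1} > 1$; the inequality $r < p^{*}_{s}$ is equivalent, after clearing denominators, to $\delta := dp - (d-sp)q > 0$, which is the stated condition $q < \frac{dp}{d-sp}\cdot\frac{?}{?}$ — I would verify this algebraically and also confirm $0 < a < 1$ with the given formula for $a$. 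This interpolation-exponent check is pure algebra and I would present it compactly.

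The core step is the interpolation: since $q < r < p^{*}_{s}$, write $\frac{1}{r} = \frac{\theta}{q} + \frac{1-\theta}{p^{*}_{s}}$ for a unique $\theta \in (0,1)$, so that H\"older gives
\begin{equation*}
    \|u\|_{L^{r}(\mathbb{R}^{d})} \leq \|u\|_{L^{q}(\mathbb{R}^{d})}^{\theta}\,\|u\|_{L^{p^{*}_{s}}(\mathbb{R}^{d})}^{1-\theta}.
\end{equation*}
Then apply Lemma \ref{Lemma: Frac Sob ineq} to bound $\|u\|_{L^{p^{*}_{s}}(\mathbb{R}^{d})} \leq \mathcal{C}^{1/p}[u]_{W^{s,p}(\mathbb{R}^{d})}$, obtaining
\begin{equation*}
    \|u\|_{L^{r}(\mathbb{R}^{d})} \leq \mathcal{C}^{\frac{1-\theta}{p}}\,[u]_{W^{s,p}(\mathbb{R}^{d})}^{1-\theta}\,\|u\|_{L^{q}(\mathbb{R}^{d})}^{\theta}.
\end{equation*}
It then remains to identify $1-\theta$ with the claimed exponent $a$. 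Solving $\frac{1}{r} = \frac{\theta}{q} + \frac{1-\theta}{p^{*}_{s}}$ for $1-\theta$ and substituting $r = p\frac{q-1}{p-1}$ and $p^{*}_{s} = \frac{dp}{d-sp}$ should, after simplification, yield exactly $a = \frac{d(q-p)}{(q-1)(dp-(d-sp)q)}$, and correspondingly $\theta = 1-a$; this is the one genuine computation in the proof and I would carry it out, using $\delta > 0$ to guarantee the denominator is positive so that $a > 0$, and $r < p^{*}_{s}$ to guarantee $a < 1$.

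There is no real obstacle here — the only thing to be careful about is that all three norms are finite so that H\"older is legitimately applicable: $u \in \mathcal{D}^{p,q}_{s} = (W^{s,p}(\mathbb{R}^{d}) \cap L^{q}(\mathbb{R}^{d}))\setminus\{0\}$ gives $\|u\|_{L^{q}}$ and $[u]_{W^{s,p}}$ finite, and Lemma \ref{Lemma: Frac Sob ineq} upgrades this to $\|u\|_{L^{p^{*}_{s}}}$ finite, hence $\|u\|_{L^{r}}$ finite by interpolation; so the chain of inequalities is valid rather than a vacuous $\infty \le \infty$. The mild subtlety worth a sentence is that one should check $q < p^{*}_{s}$ as well (needed for the H\"older splitting to place $r$ strictly between $q$ and $p^{*}_{s}$); this follows from $q < \frac{p(d-s)}{d-sp} < \frac{dp}{d-sp} = p^{*}_{s}$ since $d-s < d$. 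With the exponent identity in hand the statement of Theorem \ref{Theorem: CKN ineq without opt} follows immediately, with constant $\mathcal{C}^{a/p}$ as asserted.
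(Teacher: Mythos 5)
Your proposal is correct and follows essentially the same route as the paper: both interpolate $\|u\|_{L^{r}}$ between $\|u\|_{L^{q}}$ and $\|u\|_{L^{p^{*}_{s}}}$ via H\"older's inequality (your $\theta$ is the paper's $1-a$) and then invoke the Maz'ya--Shaposhnikova fractional Sobolev inequality of Lemma \ref{Lemma: Frac Sob ineq} to replace $\|u\|_{L^{p^{*}_{s}}}$ by $\mathcal{C}^{1/p}[u]_{W^{s,p}(\mathbb{R}^{d})}$. The exponent bookkeeping you sketch (checking $q<r<p^{*}_{s}$ and solving the interpolation equation to recover the stated formula for $a$) is exactly the computation carried out in the paper.
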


\begin{proof}
It is easy to see that $q<r<p^{*}_{s}$. Next, we determine the interpolation parameter $a \in (0,1)$ by requiring
\begin{equation*}
\frac{1}{r} = \frac{a}{p^{*}_{s}} + \frac{1-a}{q}.
\end{equation*}
Solving for $a$, we obtain
\begin{equation*}
a = \frac{\tfrac{1}{r}-\tfrac{1}{q}}{\tfrac{1}{p^{*}_{s}}-\tfrac{1}{q}}
= \frac{\dfrac{q-r}{rq}}{\dfrac{q-p^{*}_{s}}{p^{*}_{s}q}}
= \frac{(q-r)p^{*}_{s}}{(q-p^{*}_{s})r}.
\end{equation*}
Since
\begin{equation*}
q-r = q-\frac{p(q-1)}{p-1}
= \frac{q(p-1)-p(q-1)}{p-1}
= \frac{p-q}{p-1},
\end{equation*}
and
\begin{equation*}
q-p^{*}_{s} = q-\frac{dp}{d-sp}
= \frac{q(d-sp)-dp}{d-sp}
= -\frac{dp-(d-sp)q}{d-sp},
\end{equation*}
we deduce
\begin{equation*}
a = \frac{p^{*}_{s}  \tfrac{p-q}{p-1}}{r  \big(-\tfrac{dp-(d-sp)q}{d-sp}\big)}
= \frac{p^{*}_{s}(q-p)(d-sp)}{(p-1)r \,(dp-(d-sp)q))}.
\end{equation*}
Since $p^{*}_{s}(d-sp)=dp$ and $(p-1)r = p(q-1)$, this reduces to
\begin{equation*}
a = \frac{d(q-p)}{(q-1)\,(dp-(d-sp)q)}.
\end{equation*}
Now we estimate $\|u\|_{L^{r}(\mathbb{R}^{d})}$. We write
\begin{equation*}
\|u\|^{r}_{L^{r}(\mathbb{R}^{d})}=\int_{\mathbb{R}^d}|u(x)|^r \, dx
=\int_{\mathbb{R}^d}|u(x)|^{ar}\,|u(x)|^{(1-a)r} \, dx.
\end{equation*}
Applying Hölder’s inequality with exponents
\begin{equation*}
 \nu=\frac{p^{*}_{s}}{ar}, \qquad \xi=\frac{q}{(1-a)r},   
\end{equation*}
we obtain
\begin{equation*}
\int_{\mathbb{R}^d} |u(x)|^{ar}|u(x)|^{(1-a)r} \, dx
\leq
\|u\|_{L^{p^{*}_{s}}}^{ar}\,\|u\|_{L^q}^{(1-a)r}.
\end{equation*}
Thus,
\begin{equation*}
\|u\|_{L^{r}(\mathbb{R}^{d})}\le \|u\|_{L^{p^{*}_{s}}(\mathbb{R}^{d})}^a\,\|u\|_{L^{q}(\mathbb{R}^{d})}^{1-a}.
\end{equation*}
Finally, using the fractional Sobolev inequality (see Lemma \ref{Lemma: Frac Sob ineq}), we conclude
\begin{equation*}
   \|u\|_{L^{r}(\mathbb{R}^{d})} \leq  \mathcal{C}^{\frac{a}{p}} [u]^{a}_{W^{s,p}(\mathbb{R}^{d})} \|u\|_{L^{q}(\mathbb{R}^{d})}^{1-a}. 
\end{equation*}
This proves the theorem.
\end{proof}

\smallskip

The following lemma was used by del Pino and Dolbeault in \cite{Manuel2003} to prove the Sobolev logarithmic inequality in the local case. They combined an interpolation inequality with this lemma to obtain their result. We also use the same lemma to establish the logarithmic Sobolev inequality in the non-local case. For completeness, we give the proof of the following lemma.

\begin{lemma}\label{Lemma on log ineq}
    Let $1\leq q<r< \infty$. Then for any $u \in L^{q}(\mathbb{R}^{d}) \cap L^{r}(\mathbb{R}^{d})$, we have
    \begin{equation}
        \int_{\mathbb{R}^{d}} |u(x)|^{q} \log \left( \frac{|u(x)|}{\| u \|_{L^{q}(\mathbb{R}^{d})}} \right) \, dx \leq \frac{r}{r-q} \| u\|^{q}_{L^{q}(\mathbb{R}^{d})} \log \left( \frac{\|u \|_{L^{r}(\mathbb{R}^{d})}}{\| u \|_{L^{q}(\mathbb{R}^{d})}} \right).
    \end{equation}
\end{lemma}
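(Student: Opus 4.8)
The plan is to prove the claim by a direct application of Jensen's inequality (or equivalently Hölder's inequality for the relevant exponents), exploiting the fact that $\log$ is concave. The statement is scale-invariant in $u$ (both sides are homogeneous of degree $q$ in $u$ after the logarithm absorbs the normalization), so I would first reduce to the normalized case $\|u\|_{L^q(\mathbb{R}^d)} = 1$; then the left side becomes $\int_{\mathbb{R}^d} |u|^q \log|u|\,dx$ and the right side becomes $\frac{r}{r-q}\log \|u\|_{L^r(\mathbb{R}^d)}$.

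With this normalization, $|u(x)|^q\,dx$ is a probability measure on $\mathbb{R}^d$. I would write $|u|^q \log|u| = \frac{1}{r-q}\,|u|^q \log\bigl(|u|^{r-q}\bigr)$ and then apply Jensen's inequality to the concave function $\log$ with respect to the probability measure $d\mu = |u|^q\,dx$:
\begin{equation*}
\int_{\mathbb{R}^d} \log\bigl(|u(x)|^{r-q}\bigr)\, d\mu(x) \leq \log \left( \int_{\mathbb{R}^d} |u(x)|^{r-q}\, d\mu(x) \right) = \log \left( \int_{\mathbb{R}^d} |u(x)|^{r}\, dx \right).
\end{equation*}
Multiplying by $\frac{1}{r-q}$ gives $\int_{\mathbb{R}^d} |u|^q \log|u|\,dx \leq \frac{1}{r-q}\log\|u\|^r_{L^r} = \frac{r}{r-q}\log\|u\|_{L^r}$, which is exactly the normalized inequality. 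Undoing the normalization by replacing $u$ with $u/\|u\|_{L^q}$ recovers the general statement, since $\|u/\|u\|_{L^q}\|_{L^r} = \|u\|_{L^r}/\|u\|_{L^q}$ and the left-hand side transforms correctly.

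There is essentially no serious obstacle here; the only points requiring a small amount of care are the integrability justifications. One should note that $|u|^q\log|u|$ is integrable: on the set where $|u|\geq 1$ this follows from $|u|^q\log|u| \leq C_\varepsilon |u|^{q+\varepsilon} \leq C_\varepsilon(|u|^q + |u|^r)$ for suitable small $\varepsilon$ (using $q<r$), and on the set where $|u|<1$ the integrand $|u|^q\log|u|$ is negative and bounded below by $-|u|^q/(e\cdot q)$ times a constant (since $t^q|\log t|$ is bounded on $(0,1]$ up to the $|u|^q$ factor), hence integrable. Similarly $|u|^r \in L^1$ by hypothesis, so the right-hand side of Jensen's inequality is finite (or the inequality is trivial if $\|u\|_{L^r} = 0$, i.e. $u\equiv 0$, in which case both sides vanish). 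If one prefers to avoid Jensen's inequality, the same result follows from Hölder's inequality applied to $\|u\|_{L^\theta}$ for $\theta$ slightly larger than $q$ and then differentiating the resulting inequality in $\theta$ at $\theta = q$; but the Jensen argument is cleaner and I would present that.
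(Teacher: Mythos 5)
Your proof is correct, but it takes a genuinely different route from the paper's. You apply Jensen's inequality directly: after normalizing to $\|u\|_{L^q}=1$, you treat $|u|^q\,dx$ as a probability measure and use concavity of $\log$ to get $\int \log(|u|^{r-q})\,|u|^q\,dx \leq \log\bigl(\int |u|^r\,dx\bigr)$, then divide by $r-q$. The paper instead sets $\Phi(s)=\log\|u\|_{L^s}$, uses H\"older's interpolation inequality to dominate $\Phi(s)$ by an affine (in $\alpha(s)$) function $\theta(s)$ with equality at $s=q$, and differentiates both sides at $s=q$ to get the inequality $\Phi'(q)\leq \theta'(q)$, which is exactly the claim. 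You anticipate this alternative in your final remark. The two are closely related — the log-convexity in H\"older that the paper differentiates is itself a consequence of Jensen — but your presentation is more direct and avoids computing $\Phi'$ and $\theta'$ and justifying differentiation under the integral sign; the paper's version is arguably more in the spirit of del Pino–Dolbeault and fits the pattern of differentiating an interpolation inequality at an endpoint, which is thematically convenient given that the rest of the paper plugs in the fractional interpolation inequality from Theorem~\ref{Theorem: CKN ineq without opt} at $q\to p$. Your integrability remarks are adequate, and the edge cases ($u$ vanishing on a positive-measure set, $\|u\|_{L^r}<1$) are handled correctly since the measure $|u|^q\,dx$ has no mass where $u=0$ and the upper bound on the positive part of the integrand by $|u|^r$ keeps the left side finite from above.
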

\begin{proof}
    By applying Hölder’s inequality with $\alpha(s) = \frac{q}{s} \left( \frac{r-s}{r-q} \right)$, $q \leq s \leq r$, we get
\begin{equation*}
    \| u \|_{L^{s}(\mathbb{R}^{d})} \leq \| u \|^{\alpha(s)}_{L^{q} (\mathbb{R}^{d})} \| u \|^{1-\alpha(s)}_{L^{r}(\mathbb{R}^{d})}.
\end{equation*}
Taking logarithm on both sides,
\begin{equation}\label{eqn 01}
    \log  \| u \|_{L^{s}(\mathbb{R}^{d})} \leq \alpha(s) \log  \| u \|_{L^{q}  (\mathbb{R}^{d})} + (1- \alpha(s)) \log \| u \|_{L^{r}(\mathbb{R}^{d})}.
\end{equation}
Now, define $\Phi(s) := \log  \| u \|_{L^{s}(\mathbb{R}^{d})} = \frac{1}{s} \log \left( \int_{\mathbb{R}^{d}} |u(x)|^{s} \, dx \right) $. Then, we have
\begin{equation}\label{Phi prime s}
    s \Phi'(s) = \frac{1}{\| u \|^{s}_{L^{s}(\mathbb{R}^{d})}} \int_{\mathbb{R}^{d}} |u(x)|^{s} \log \left( \frac{|u(x)|}{\|u\|_{L^{s}(\mathbb{R}^{d})}} \right) \, dx .
\end{equation}
Also, set $\theta (s) = \alpha(s) \log  \| u \|_{L^{q}  (\mathbb{R}^{d})} + (1- \alpha(s)) \log \| u \|_{L^{r}(\mathbb{R}^{d})}$. Then, 
\begin{align}\label{theta prime s}
    \theta'(s) & = \alpha'(s) \left( \log  \| u \|_{L^{q}  (\mathbb{R}^{d})} - \log \| u \|_{L^{r}(\mathbb{R}^{d})} \right) \nonumber \\ &  = \left( \frac{qr}{s^{2}(r-q)} \right) \log \left( \frac{\| u \|_{L^{r}(\mathbb{R}^{d})}}{\| u \|_{L^{q}  (\mathbb{R}^{d})}} \right) .
\end{align}
From inequality \eqref{eqn 01}, we have $\Phi(s) \leq \theta(s)$ for $q \leq s \leq r$, and equality when $s=q$. So, $\Phi'(q) \leq \theta'(q)$. Combining \eqref{Phi prime s} and \eqref{theta prime s} at $s=q$, we obtain
\begin{equation*}
     \int_{\mathbb{R}^{d}} |u(x)|^{q} \log \left( \frac{|u(x)|}{\| u \|_{L^{q}(\mathbb{R}^{d})}} \right) \, dx \leq \frac{r}{r-q} \| u\|^{q}_{L^{q}(\mathbb{R}^{d})} \log \left( \frac{\|u \|_{L^{r}(\mathbb{R}^{d})}}{\| u \|_{L^{q}(\mathbb{R}^{d})}} \right).
\end{equation*}
Hence, the lemma is proved.
\end{proof}

\bigskip

Now we prove Theorem \ref{Theorem: Frac log Sob ineq} by combining Theorem \ref{Theorem: CKN ineq without opt} with Lemma \ref{Lemma on log ineq}. Note that the constant $\mathcal{C}$ in our fractional Sobolev logarithmic inequality is not optimal.

\begin{proof}[\textbf{Proof of Theorem \ref{Theorem: Frac log Sob ineq}}]Let us assume that $sp<d, ~ p<q< \frac{p(d-s)}{d-sp}$, and $r=p \frac{q-1}{p-1}$.  From Theorem \ref{Theorem: CKN ineq without opt}, we have
\begin{equation*}
      \|u\|_{L^{r}(\mathbb{R}^{d})} \leq \mathcal{C}^{\frac{a}{p}} [u]^{a}_{W^{s,p}(\mathbb{R}^{d})} \|u\|_{L^{q}(\mathbb{R}^{d})}^{1-a}, \quad \forall \ u \in \mathcal{D}^{p,q}_{s}, 
    \end{equation*}
    where 
    \begin{equation*}
         a= \frac{d(q-p)}{(q-1)(dp-(d-sp)q)}.
    \end{equation*}
The above inequality can be equivalently written as
\begin{equation*}
   \frac{\|u\|_{L^{r}(\mathbb{R}^{d})}}{\|u\|_{L^{q}(\mathbb{R}^{d})}}  \leq \mathcal{C}^{\frac{a}{p}} \left( \frac{[u]_{W^{s,p}(\mathbb{R}^{d})}}{\|u\|_{L^{q}(\mathbb{R}^{d})}}  \right)^{a}.
\end{equation*}
Taking logarithms on both sides yields
\begin{equation}
  \frac{1}{a}  \log \left( \frac{\|u\|_{L^{r}(\mathbb{R}^{d})}}{\|u\|_{L^{q}(\mathbb{R}^{d})}} \right) - \log  \left( \mathcal{C}^{\frac{1}{p}}  \right) \leq \log \left(  \frac{[u]_{W^{s,p}(\mathbb{R}^{d})}}{\|u\|_{L^{q}(\mathbb{R}^{d})}}  \right).
\end{equation}
From Lemma \ref{Lemma on log ineq}, we further have
\begin{equation*}
\left( \frac{r-q}{r} \right) \frac{1}{a} \int_{\mathbb{R}^{d}} \frac{|u(x)|^{q}}{\|u\|^{q}_{L^{q}(\mathbb{R}^{d})}}  \log \left( \frac{|u(x)|}{\| u \|_{L^{q}(\mathbb{R}^{d})}} \right) \, dx  \leq   \frac{1}{a}  \log \left( \frac{\|u\|_{L^{r}(\mathbb{R}^{d})}}{\|u\|_{L^{q}(\mathbb{R}^{d})}} \right).
\end{equation*}
Combining the above two inequalities and letting $q \to p$, with the values of $r$ and $a$ given in Theorem \ref{Theorem: CKN ineq without opt}, we obtain
\begin{equation}\label{ineq taking log}
    \frac{sp}{d} \int_{\mathbb{R}^{d}} \frac{|u(x)|^{p}}{\|u\|^{p}_{L^{p}(\mathbb{R}^{d})}} \log \left( \frac{|u(x)|}{\|u\|_{L^{p}(\mathbb{R}^{d})}} \right)  \, dx -  \log  \left( \mathcal{C}^{\frac{1}{p}}  \right) \leq \log \left(  \frac{[u]_{W^{s,p}(\mathbb{R}^{d})}}{\|u\|_{L^{p}(\mathbb{R}^{d})}}  \right).
\end{equation}
Finally, imposing the normalization $\int_{\mathbb{R}^{d}} |u(x)|^{p} \, dx = 1$, we deduce
\begin{equation*}
     \int_{\mathbb{R}^{d}} |u(x)|^{p} \log |u(x)|   \, dx \leq \frac{d}{sp^{2}} \log \left( \mathcal{C} [u]^{p}_{W^{s,p}(\mathbb{R}^{d})} \right).
\end{equation*}
This completes the proof.
\end{proof}

\begin{proof}[\textbf{Proof of Theorem \ref{Theorem:2 Frac log Sob ineq}}] We begin with the assumption of Theorem \ref{Theorem:2 Frac log Sob ineq}. From inequality \eqref{ineq taking log} in the proof of Theorem \ref{Theorem: Frac log Sob ineq}, we know that
\begin{equation*}
    \frac{sp}{d} \int_{\mathbb{R}^{d}} \frac{|u(x)|^{p}}{\|u\|^{p}_{L^{p}(\mathbb{R}^{d})}} \log \left( \frac{|u(x)|}{\|u\|_{L^{p}(\mathbb{R}^{d})}} \right)  \, dx -  \log  \left( \mathcal{C}^{\frac{1}{p}}  \right) \leq \log \left(  \frac{[u]_{W^{s,p}(\mathbb{R}^{d})}}{\|u\|_{L^{p}(\mathbb{R}^{d})}}  \right).
\end{equation*}
This inequality can be rewritten as
\begin{equation*}
    \frac{sp}{d} \int_{\mathbb{R}^{d}} \frac{|u(x)|^{p}}{\|u\|^{p}_{L^{p}(\mathbb{R}^{d})}} \log \left( \frac{|u(x)|^{p}}{\|u\|^{p}_{L^{p}(\mathbb{R}^{d})}} \right)  \, dx  \leq \log  \left( \mathcal{C} \left(  \frac{[u]^{p}_{W^{s,p}(\mathbb{R}^{d})}}{\|u\|^{p}_{L^{p}(\mathbb{R}^{d})}}  \right) \right).
\end{equation*}
It is easy to check that for any $b>0$ and $x>0$, the following inequality holds true
\begin{equation*}
    \log x \leq bx - \log b -1.
\end{equation*}
Applying this inequality to the above inequality with $x= \mathcal{C} \left(  \frac{[u]^{p}_{W^{s,p}(\mathbb{R}^{d})}}{\|u\|^{p}_{L^{p}(\mathbb{R}^{d})}}  \right)$ and $b= e^{p-1} c^{p}>0$, we obtain
\begin{align*}
    \frac{sp}{d} \int_{\mathbb{R}^{d}} |u(x)|^{p} \log \left( \frac{|u(x)|^{p}}{\|u\|^{p}_{L^{p}(\mathbb{R}^{d})}} \right)  \, dx  & \leq \|u\|^{p}_{L^{p}(\mathbb{R}^{d})} \Big( e^{p-1} c^{p} \, \mathcal{C} \left(  \frac{[u]^{p}_{W^{s,p}(\mathbb{R}^{d})}}{\|u\|^{p}_{L^{p}(\mathbb{R}^{d})}}  \right) \\ & \hspace{6mm} -(\log (e^{p-1}c^{p}) +1) \Big) \\ & =  e^{p-1} c^{p} \, \mathcal{C}  [u]^{p}_{W^{s,p}(\mathbb{R}^{d})} -  p(\log c +1) \|u\|^{p}_{L^{p}(\mathbb{R}^{d})} .
\end{align*}
Therefore, we arrive at
\begin{equation*}
  \int_{\mathbb{R}^{d}} |u(x)|^{p} \log \left( \frac{|u(x)|^{p}}{\|u\|^{p}_{L^{p}(\mathbb{R}^{d})}} \right)  \, dx + \frac{d}{s} (1+ \log c) \|u\|^{p}_{L^{p}(\mathbb{R}^{d})}  \leq \frac{ d e^{p-1} c^{p} \, \mathcal{C}}{sp}   [u]^{p}_{W^{s,p}(\mathbb{R}^{d})}  .
\end{equation*}
This finishes the proof of the theorem.
\end{proof}

\section{Proof of Weighted fractional Sobolev logarithmic inequality}\label{Proof of Weighted Frac log Sob ineq} 

In this section, we extend the fractional Sobolev logarithmic inequalities to the weighted fractional Sobolev space and establish Theorem \ref{Theorem: Weighted Frac log inequality 1}. We begin by defining the weighted fractional Sobolev space. For $p>1$, $s \in (0,1)$, and $\alpha, \alpha_{1}, \alpha_{2} \in \mathbb{R}$ with $\alpha = \alpha_{1}+ \alpha_{2}$, the weighted Gagliardo seminorm is given by  
\begin{equation*}
    [u]_{W^{s,p, \alpha}(\mathbb{R}^{d})} := \left( \int_{\mathbb{R}^{d}} \int_{\mathbb{R}^{d}} \frac{ |u(x)-u(y)|^{p}}{|x-y|^{d+sp}} |x|^{\alpha_{1} p} |y|^{\alpha_{2} p}  \, dx \, dy \right)^{\frac{1}{p}} \leq \infty, \quad  \forall \ u \in L^{p}(\mathbb{R}^{d}).
\end{equation*}
The norm of this space is defined as  
\begin{equation}\label{Weighted norm definition}
    \|u\|_{W^{s,p, \alpha}(\mathbb{R}^{d})} = \left( \|u\|^{p}_{L^{p}(\mathbb{R}^{d})} + [u]^{p}_{W^{s,p, \alpha}(\mathbb{R}^{d})} \right)^{\frac{1}{p}}.
\end{equation}
Dipierro and Valdinoci \cite{Valdinoci2015} proved that if $\alpha_{1}p, \, \alpha_{2}p \in (-d, sp)$ and $\alpha_{1}p+ \alpha_{2}p > -d$, then for every $u \in C^{1}_{c}(\mathbb{R}^{d})$, the seminorm $[u]_{W^{s,p, \alpha}(\mathbb{R}^{d})} < \infty$. We then define the space $ W^{s,p, \alpha}(\mathbb{R}^{d})$ as the closure of $C^{1}_{c}(\mathbb{R}^{d})$ with respect to the norm $\| \cdot \|_{W^{s,p, \alpha}(\mathbb{R}^{d})}$.

\smallskip

 Nguyen and Squassina in \cite{Squassina2018} studied Caffarelli–Kohn–Nirenberg type inequalities in fractional Sobolev spaces and proved the following theorem. Let $d \geq 1$, $p>1$, $q \geq 1$, $\tau >0$, $a \in (0,1]$, and $\alpha, \beta, \gamma \in \mathbb{R}$ satisfy
\begin{equation*}
    \frac{1}{\tau}+ \frac{\gamma}{d} = a \left( \frac{1}{p} + \frac{\alpha-s}{d} \right) + (1-a) \left( \frac{1}{q} + \frac{\beta}{d} \right)  \ .
\end{equation*}
If $a>0$, assume also that, with $\gamma = a \sigma + (1-a) \beta$,
\begin{equation*}
    0 \leq \alpha - \sigma
\end{equation*}
and 
\begin{equation*}
    \alpha - \sigma \leq s \hspace{.5cm} \text{if} \hspace{.5cm} \frac{1}{\tau}+ \frac{\gamma}{d} = \frac{1}{p}+ \frac{\alpha-s}{d}  .
\end{equation*}
If $\frac{1}{\tau} + \frac{\gamma}{d} >0$, then we have
\begin{equation}\label{fractional CKN ineq}
          \| |x|^{\gamma}u \|_{L^{\tau}(\mathbb{R}^d)} \leq C [u]^{a}_{W^{s,p, \alpha}(\mathbb{R}^d)} \||x|^{\beta} u \|^{(1-a)}_{L^{q} (\mathbb{R}^d)},  \hspace{.3cm}  \forall \ u  \in C^{1}_{c}(\mathbb{R}^d)  .
      \end{equation}
\smallskip

Now, if we take $\gamma=0$ and $a=1$, the fractional Caffarelli--Kohn--Nirenberg inequality \eqref{fractional CKN ineq} reduces to  
\begin{equation}  \label{frac-sobolev}
\left( \int_{\mathbb{R}^{d}} |u(x)|^{\tau} \, dx \right)^{\!\frac{1}{\tau}}
 \leq C \left( \int_{\mathbb{R}^{d}} \int_{\mathbb{R}^{d}} 
 \frac{ |u(x)-u(y)|^{p}}{|x-y|^{d+sp}} 
 |x|^{\alpha_{1} p} |y|^{\alpha_{2} p}  \, dx \, dy \right)^{\!\frac{1}{p}},
\end{equation}
under the conditions
\begin{equation*}
 0 \leq \alpha < s, \qquad sp- \alpha p < d, \qquad \text{and} \qquad 
   \tau = \frac{dp}{d-sp+ \alpha p}.  
\end{equation*}
The inequality \eqref{frac-sobolev} is nothing but the fractional Sobolev inequality in weighted spaces, with the critical exponent
\begin{equation*}
    p^{*}_{s, \alpha} := \tau = \frac{dp}{d-sp+ \alpha p}, \qquad \text{if} \quad sp- \alpha p < d.
\end{equation*}
We can therefore state the following fundamental result.

\begin{lemma}[Weighted fractional Sobolev inequality]\label{Weighted fractional Sobolev inequality}
    Let $p>1$, $s \in (0,1)$, and $\alpha_{1}, \alpha_{2} \in \mathbb{R}$ be such that $\alpha= \alpha_{1}+ \alpha_{2}$,  $\alpha_{1}p$, $\alpha_{2}p \in (-d, sp)$, $ 0 \leq 
   \alpha p < sp$, $sp-\alpha p < d$
    with $p^{*}_{s, \alpha}= dp/(d-sp+\alpha p)$. Then there exists a constant $\mathcal{C}_{\alpha}=\mathcal{C}_{\alpha}(d,p,s,\alpha_{1}, \alpha_{2})>0$ such that
    \begin{align}
        \left( \int_{\mathbb{R}^{d}} |u(x)|^{p^{*}_{s, \alpha}} \, dx \right)^{\!\frac{1}{p^{*}_{s, \alpha}}}
        \leq \mathcal{C}^{\frac{1}{p}}_{\alpha} \Big( \int_{\mathbb{R}^{d}} \int_{\mathbb{R}^{d}}
        \frac{ |u(x)-u(y)|^{p}}{|x-y|^{d+sp}} 
        |x|^{\alpha_{1} p} |y|^{\alpha_{2} p} &  \, dx \, dy \Big)^{\!\frac{1}{p}} , \nonumber  \\ &  \forall \ u \in W^{s,p, \alpha}(\mathbb{R}^{d}).
    \end{align}
\end{lemma}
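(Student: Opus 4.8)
The plan is to realize the weighted fractional Sobolev inequality as a degenerate special case of the Nguyen--Squassina Caffarelli--Kohn--Nirenberg inequality \eqref{fractional CKN ineq}, taking $\gamma = 0$ and $a = 1$, and then to extend it from $C^1_c(\mathbb{R}^d)$ to all of $W^{s,p,\alpha}(\mathbb{R}^d)$ by density. With $a = 1$ the factor $\||x|^{\beta}u\|^{1-a}_{L^{q}(\mathbb{R}^d)}$ in \eqref{fractional CKN ineq} disappears, so $q$ and $\beta$ play no role (one may simply set $q = p$, $\beta = 0$), and the inequality degenerates to $\||x|^{\gamma}u\|_{L^{\tau}(\mathbb{R}^d)} \le C\,[u]_{W^{s,p,\alpha}(\mathbb{R}^d)}$.

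First I would check that the stated constraints on $\alpha_1,\alpha_2$ map exactly onto the admissibility conditions of \eqref{fractional CKN ineq}. With $a = 1$ the relation $\gamma = a\sigma + (1-a)\beta$ forces $\sigma = \gamma = 0$, and the scaling identity
\[
\frac{1}{\tau} + \frac{\gamma}{d} = a\Big(\frac{1}{p} + \frac{\alpha - s}{d}\Big) + (1-a)\Big(\frac{1}{q} + \frac{\beta}{d}\Big)
\]
becomes $\frac{1}{\tau} = \frac{1}{p} + \frac{\alpha - s}{d} = \frac{d - sp + \alpha p}{dp}$, i.e.\ $\tau = p^{*}_{s,\alpha} = \frac{dp}{d - sp + \alpha p}$. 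The hypothesis $0 \le \alpha - \sigma$ is just $\alpha \ge 0$, which follows from $0 \le \alpha p$; the scaling equality $\frac{1}{\tau} + \frac{\gamma}{d} = \frac{1}{p} + \frac{\alpha - s}{d}$ holds by construction, so the additional requirement $\alpha - \sigma \le s$ reduces to $\alpha \le s$, implied by $\alpha p < sp$; and $\frac{1}{\tau} + \frac{\gamma}{d} = \frac{d - sp + \alpha p}{dp} > 0$ is exactly $sp - \alpha p < d$ (which moreover makes $p^{*}_{s,\alpha} > p > 1$). Finally, since $\alpha_1 p,\alpha_2 p \in (-d, sp)$ and $\alpha_1 p + \alpha_2 p = \alpha p > -d$, the Dipierro--Valdinoci result quoted above guarantees $[u]_{W^{s,p,\alpha}(\mathbb{R}^d)} < \infty$ for $u \in C^1_c(\mathbb{R}^d)$, so the right-hand side is finite. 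Hence \eqref{fractional CKN ineq} delivers a constant $C = C(d,p,s,\alpha_1,\alpha_2) > 0$ with $\|u\|_{L^{p^{*}_{s,\alpha}}(\mathbb{R}^d)} \le C\,[u]_{W^{s,p,\alpha}(\mathbb{R}^d)}$ for every $u \in C^1_c(\mathbb{R}^d)$; setting $\mathcal{C}_{\alpha} := C^{p}$ gives the claimed form on $C^1_c(\mathbb{R}^d)$.

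It then remains to pass to the closure. Given $u \in W^{s,p,\alpha}(\mathbb{R}^d)$, choose $u_n \in C^1_c(\mathbb{R}^d)$ with $u_n \to u$ in $\|\cdot\|_{W^{s,p,\alpha}(\mathbb{R}^d)}$; applying the $C^1_c$ inequality to $u_n - u_m$ shows $\{u_n\}$ is Cauchy in $L^{p^{*}_{s,\alpha}}(\mathbb{R}^d)$, hence $u_n \to v$ there for some $v$, while also $u_n \to u$ in $L^{p}(\mathbb{R}^d)$; comparing almost-everywhere limits along a subsequence forces $v = u$, so $u \in L^{p^{*}_{s,\alpha}}(\mathbb{R}^d)$, and letting $n \to \infty$ in $\|u_n\|_{L^{p^{*}_{s,\alpha}}(\mathbb{R}^d)} \le \mathcal{C}_{\alpha}^{1/p}[u_n]_{W^{s,p,\alpha}(\mathbb{R}^d)}$, using $[u_n]_{W^{s,p,\alpha}(\mathbb{R}^d)} \to [u]_{W^{s,p,\alpha}(\mathbb{R}^d)}$, yields the lemma. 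The only point requiring genuine care is this last density step --- in particular identifying the $L^{p^{*}_{s,\alpha}}$-limit of the approximants with $u$ itself, rather than merely bounding $\liminf_n \|u_n\|_{L^{p^{*}_{s,\alpha}}(\mathbb{R}^d)}$ --- which the subsequential a.e.\ convergence resolves; everything else is the routine bookkeeping that the parameter ranges in the lemma are precisely the admissible ones for \eqref{fractional CKN ineq}.
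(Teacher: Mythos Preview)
Your proposal is correct and follows exactly the paper's approach: the lemma is obtained by specializing the Nguyen--Squassina inequality \eqref{fractional CKN ineq} with $\gamma=0$ and $a=1$, and you verify the parameter constraints just as the paper indicates in the paragraph preceding the lemma. You go a bit further than the paper by spelling out the density extension from $C^{1}_{c}(\mathbb{R}^{d})$ to $W^{s,p,\alpha}(\mathbb{R}^{d})$, which the paper leaves implicit.
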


\smallskip

We now derive a weighted version of the interpolation inequality, as formulated in Theorem \ref{Theorem: CKN ineq without opt}. For $s \in (0,1)$, $p > 1$, $\alpha= \alpha_{1}+ \alpha_{2}$,  $\alpha_{1}p$, $\alpha_{2}p \in (-d, sp)$, $ 
   \alpha_{1}p+\alpha_{2}p > -d$, $sp-\alpha p < d$, and $q > p$, we define the space $\mathcal{D}^{p,q}_{s, \alpha}$ as
\begin{equation}
   \mathcal{D}^{p,q}_{s, \alpha} = (W^{s,p, \alpha}(\mathbb{R}^{d}) \cap L^{q}(\mathbb{R}^{d})) \setminus \{ 0\}.
\end{equation}
Following the strategy used in the proof of Theorem \ref{Theorem: CKN ineq without opt}, we obtain the following result:

\begin{theorem}\label{Theorem: Weighted CKN ineq without opt}
Let $p>1$, $s \in (0,1)$, and $\alpha_{1}, \alpha_{2} \in \mathbb{R}$ be such that $\alpha= \alpha_{1}+ \alpha_{2}$,  $\alpha_{1}p$, $\alpha_{2}p \in (-d, sp)$, $ 
 0 \leq \alpha p < sp$, and $sp-\alpha p < d$.
Let $q$ be such that  $p < q < \frac{p(d-s+\alpha)}{d-sp+\alpha p}$, and set $r = p \frac{q-1}{p-1}$. Then there exists a constant  $\mathcal{C}_{\alpha}=\mathcal{C}_{\alpha}(d,p,s,\alpha_{1},\alpha_{2})>0$ such that, for all $u \in \mathcal{D}^{p,q}_{s,\alpha}$,
\begin{equation}
   \|u\|_{L^{r}(\mathbb{R}^{d})} \leq 
   \mathcal{C}_{\alpha}^{\frac{a}{p}} 
   [u]_{W^{s,p,\alpha}(\mathbb{R}^{d})}^{a}
   \|u\|_{L^{q}(\mathbb{R}^{d})}^{1-a},
\end{equation}
where $\mathcal{C}_{\alpha}$ is as defined in Lemma~\ref{Weighted fractional Sobolev inequality}, and
\begin{equation}
   a = \frac{d(q-p)}{(q-1)\big(dp-(d-sp+ \alpha p)q\big)},
\end{equation}
with $\delta_{\alpha}:=dp-(d-sp+ \alpha p)q>0$.
\end{theorem}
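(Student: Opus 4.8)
The plan is to mimic exactly the proof of Theorem \ref{Theorem: CKN ineq without opt}, replacing the ordinary fractional Sobolev inequality (Lemma \ref{Lemma: Frac Sob ineq}) with its weighted counterpart (Lemma \ref{Weighted fractional Sobolev inequality}). The critical exponent changes from $p^{*}_{s} = \frac{dp}{d-sp}$ to $p^{*}_{s,\alpha} = \frac{dp}{d-sp+\alpha p}$, and the condition $p < q < \frac{p(d-s)}{d-sp}$ becomes $p < q < \frac{p(d-s+\alpha)}{d-sp+\alpha p}$; one should first check that this range of $q$ is nonempty and guarantees $q < r < p^{*}_{s,\alpha}$, where $r = p\frac{q-1}{p-1}$. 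Indeed, $r > q$ is equivalent to $q > p$, and $r < p^{*}_{s,\alpha}$ is, after clearing denominators, equivalent to $q < \frac{p(d-s+\alpha)}{d-sp+\alpha p}$, which also forces $\delta_{\alpha} := dp - (d-sp+\alpha p)q > 0$ (this is the same manipulation as $q - p^{*}_{s,\alpha} = -\frac{dp-(d-sp+\alpha p)q}{d-sp+\alpha p}$, so $\delta_{\alpha}>0 \iff q < p^{*}_{s,\alpha}$, and one checks $r < p^{*}_{s,\alpha}$ is the binding constraint).

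Next I would fix the interpolation exponent $a \in (0,1)$ by the relation
\begin{equation*}
\frac{1}{r} = \frac{a}{p^{*}_{s,\alpha}} + \frac{1-a}{q},
\end{equation*}
which, solving as in the unweighted case, gives $a = \frac{(q-r)p^{*}_{s,\alpha}}{(q-p^{*}_{s,\alpha})r}$. Substituting $q - r = \frac{p-q}{p-1}$, $q - p^{*}_{s,\alpha} = -\frac{\delta_{\alpha}}{d-sp+\alpha p}$, and then using $p^{*}_{s,\alpha}(d-sp+\alpha p) = dp$ together with $(p-1)r = p(q-1)$, this simplifies to
\begin{equation*}
a = \frac{d(q-p)}{(q-1)\bigl(dp - (d-sp+\alpha p)q\bigr)} = \frac{d(q-p)}{(q-1)\delta_{\alpha}},
\end{equation*}
which is exactly the claimed value; the sign conditions ensure $a \in (0,1)$.

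Then I would decompose $\|u\|_{L^{r}}^{r} = \int_{\mathbb{R}^{d}} |u|^{ar} |u|^{(1-a)r}\,dx$ and apply H\"older's inequality with the conjugate exponents $\nu = \frac{p^{*}_{s,\alpha}}{ar}$ and $\xi = \frac{q}{(1-a)r}$ (which are conjugate precisely because of the defining relation for $a$), obtaining
\begin{equation*}
\|u\|_{L^{r}(\mathbb{R}^{d})} \leq \|u\|_{L^{p^{*}_{s,\alpha}}(\mathbb{R}^{d})}^{a}\,\|u\|_{L^{q}(\mathbb{R}^{d})}^{1-a}.
\end{equation*}
Finally, I would invoke the weighted fractional Sobolev inequality of Lemma \ref{Weighted fractional Sobolev inequality} to bound $\|u\|_{L^{p^{*}_{s,\alpha}}(\mathbb{R}^{d})} \leq \mathcal{C}_{\alpha}^{1/p} [u]_{W^{s,p,\alpha}(\mathbb{R}^{d})}$, which yields the asserted inequality. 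Since $u \in \mathcal{D}^{p,q}_{s,\alpha} \subset W^{s,p,\alpha}(\mathbb{R}^{d})$, the hypotheses of that lemma are met. I do not anticipate a genuine obstacle here; the only points requiring care are the verification that the admissible range of $q$ is nonempty (using $\alpha \geq 0$ and $sp - \alpha p < d$) and bookkeeping of the various algebraic identities, both of which are routine. The main "work" is purely the arithmetic of identifying $a$; the analytic content is entirely carried by Lemma \ref{Weighted fractional Sobolev inequality} and H\"older's inequality.
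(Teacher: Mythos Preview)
Your proposal is correct and follows exactly the route the paper intends: the paper does not give a separate proof of this theorem but merely states that it is obtained by ``following the strategy used in the proof of Theorem~\ref{Theorem: CKN ineq without opt},'' and your argument carries out precisely that strategy with $p^{*}_{s}$ replaced by $p^{*}_{s,\alpha}$ and Lemma~\ref{Lemma: Frac Sob ineq} replaced by Lemma~\ref{Weighted fractional Sobolev inequality}. Your additional checks that the admissible range of $q$ is nonempty and that $q<r<p^{*}_{s,\alpha}$ (hence $a\in(0,1)$) are the only points not spelled out in the unweighted proof, and they are handled correctly.
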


\begin{proof}[\textbf{Proof of Theorem \ref{Theorem: Weighted Frac log inequality 1}}] Following the strategy of the proof of Theorem \ref{Theorem: Frac log Sob ineq}, and passing to the limit $q \to p$, we obtain that under the normalization $\int_{\mathbb{R}^{d}} |u(x)|^{p} \, dx =1$, the following inequality holds: 
\begin{equation*}
         \int_{\mathbb{R}^{d}} |u(x)|^{p} \log  |u(x)|   \, dx \leq \frac{d}{(s-\alpha)p^{2}} \log \left( \mathcal{C}_{\alpha} \int_{\mathbb{R}^{d}} \int_{\mathbb{R}^{d}}  \frac{|u(x)-u(y)|^{p}}{|x-y|^{d+sp}} |x|^{\alpha_{1} p} |y|^{\alpha_{2} p} \, dx \, dy \right).
    \end{equation*}
  This completes the proof.  
\end{proof}

\begin{remark}
By adapting the proof of Theorem \ref{Theorem:2 Frac log Sob ineq}, we also obtain the following inequality. For any $c>0$ and any $u \in W^{s,p,\alpha}(\mathbb{R}^{d})$,  
\begin{align}
    \int_{\mathbb{R}^{d}} |u(x)|^{p} 
    \log \left( \frac{|u(x)|^{p}}{\|u\|^{p}_{L^{p}(\mathbb{R}^{d})}} \right) dx 
    & + \frac{d}{s-\alpha} (1+ \log c)\, \|u\|^{p}_{L^{p}(\mathbb{R}^{d})} \nonumber \\ 
    & \leq \frac{ d \, e^{p-1} c^{p} \, \mathcal{C}_{\alpha}}{(s-\alpha)p}  
    \int_{\mathbb{R}^{d}} \int_{\mathbb{R}^{d}}  
    \frac{|u(x)-u(y)|^{p}}{|x-y|^{d+sp}} 
    |x|^{\alpha_{1} p} |y|^{\alpha_{2} p} \, dx \, dy.
\end{align}
\end{remark}

\section{Proof of Theorem \ref{Theorem: CKN with opt const}}\label{Proof of CKN with opt}

In this section, we prove Theorem \ref{Theorem: CKN with opt const} with the optimal constant $\mathscr{L}$. We begin by setting up a variational problem, then connect it to the interpolation inequality, and finally show that the infimum value of this variational problem gives the best constant in the inequality.

\smallskip

The next lemma explains the connection between the variational problem and the interpolation inequality with the optimal constant.

\begin{lemma}\label{Lemma: on a,b,M,N}
    For any fixed $a, ~ b, ~ M, ~ N >0$, define $f(s) = s^{a}M + s^{-b} N$, for all $s>0$. Then the minimum value of $f(s)$ is attained at $s= \left( \frac{bN}{aM} \right)^{\frac{1}{a+b}} $, and this minimum value is
    \begin{equation}
        \inf_{s>0} f(s) = f \left( \left( \frac{bN}{aM} \right)^{\frac{1}{a+b}} \right) = \frac{a+b}{a} \left( \frac{b}{a} \right)^{\frac{-b}{a+b}} M^{\frac{b}{a+b}} N^{\frac{a}{a+b}}. 
    \end{equation}
\end{lemma}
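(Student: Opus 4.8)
The plan is to treat this as an elementary one-variable minimization of $f$ on $(0,\infty)$. First I would differentiate, $f'(s) = a s^{a-1}M - b s^{-b-1}N$, and factor out the positive quantity $s^{-b-1}$ to get $f'(s) = s^{-b-1}\bigl(aM s^{a+b} - bN\bigr)$. Since $a+b>0$ and $aM>0$, the map $s \mapsto aM s^{a+b} - bN$ is strictly increasing on $(0,\infty)$, running from $-bN<0$ up to $+\infty$; hence $f'$ has exactly one zero, namely $s_{\ast} := \bigl(\tfrac{bN}{aM}\bigr)^{1/(a+b)}$, with $f'<0$ on $(0,s_{\ast})$ and $f'>0$ on $(s_{\ast},\infty)$. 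Thus $f$ strictly decreases then strictly increases, so $s_{\ast}$ is its global minimizer.

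Next I would evaluate $f$ at $s_{\ast}$. Using $s_{\ast}^{a+b} = \tfrac{bN}{aM}$, one computes $s_{\ast}^{a} M = \bigl(\tfrac{b}{a}\bigr)^{a/(a+b)} M^{b/(a+b)} N^{a/(a+b)}$ and $s_{\ast}^{-b} N = \bigl(\tfrac{a}{b}\bigr)^{b/(a+b)} M^{b/(a+b)} N^{a/(a+b)}$, so that $f(s_{\ast}) = \bigl[ (\tfrac{b}{a})^{a/(a+b)} + (\tfrac{a}{b})^{b/(a+b)} \bigr] M^{b/(a+b)} N^{a/(a+b)}$. It then remains only to verify the algebraic identity $(\tfrac{b}{a})^{a/(a+b)} + (\tfrac{a}{b})^{b/(a+b)} = \tfrac{a+b}{a}(\tfrac{b}{a})^{-b/(a+b)}$, which follows by multiplying through by $(\tfrac{b}{a})^{b/(a+b)}$: the left-hand side collapses to $\tfrac{b}{a}+1 = \tfrac{a+b}{a}$, which is exactly the right-hand side. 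This yields the claimed value $\inf_{s>0} f(s) = \tfrac{a+b}{a}\bigl(\tfrac{b}{a}\bigr)^{-b/(a+b)} M^{b/(a+b)} N^{a/(a+b)}$.

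There is no substantial obstacle here; the only point requiring a word of care is that the unique critical point is the \emph{global} minimum, not merely a local one. This is immediate from $f(s)\to+\infty$ as $s\to 0^{+}$ (since $b>0$) and as $s\to+\infty$ (since $a>0$), together with continuity of $f$; alternatively, the change of variable $s = e^{t}$ makes $f(e^{t}) = e^{at}M + e^{-bt}N$ a sum of convex functions, hence convex in $t$, so its stationary point is automatically the global minimizer. As a cross-check, the inequality $f(s)\geq f(s_{\ast})$ is precisely weighted AM--GM, $\tfrac{b}{a+b}X + \tfrac{a}{a+b}Y \geq X^{b/(a+b)}Y^{a/(a+b)}$, applied with $X = \tfrac{a+b}{b}s^{a}M$ and $Y = \tfrac{a+b}{a}s^{-b}N$: the powers of $s$ in $X^{b/(a+b)}Y^{a/(a+b)}$ cancel, and equality in AM--GM holds exactly when $X=Y$, i.e. when $s = s_{\ast}$.
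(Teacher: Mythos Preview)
Your proof is correct and follows essentially the same approach as the paper: differentiate, locate the unique critical point $s_{\ast}=(bN/aM)^{1/(a+b)}$, use the blow-up of $f$ at $0^{+}$ and $+\infty$ to conclude it is the global minimizer, and substitute to obtain the stated value. Your additional remarks (the convexity argument after $s=e^{t}$ and the weighted AM--GM cross-check) are nice embellishments but not needed, and the paper omits them.
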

\begin{proof}
We first observe that
\begin{equation*}
   \lim_{s \to 0^+} f(s) = \infty \quad \text{and} \quad \lim_{s \to \infty} f(s) = \infty. 
\end{equation*}
Hence the function $f(s)$ must attain its minimum at some point $s>0$. Differentiating, we get
\begin{equation*}
    f'(s) = aMs^{a-1} - bNs^{-b-1}.
\end{equation*}
Setting $f'(s)=0$ gives
\begin{equation*}
    aMs^{a-1} = bNs^{-b-1} \quad \Longrightarrow \quad s^{a+b} = \frac{bN}{aM}.
\end{equation*}
Thus the only critical point is
\begin{equation*}
    s_* = \left(\frac{bN}{aM}\right)^{\tfrac{1}{a+b}}.
\end{equation*}
To check that this corresponds to a minimum, note that for $s \to 0^+$ or $s \to \infty$ we have $f(s)\to\infty$, so the finite critical point $s_*$ must indeed give the global minimum. Finally, substituting $s_*$ into $f(s)$ we obtain
\begin{equation*}
   f(s_*) = M\left(\frac{bN}{aM}\right)^{\tfrac{a}{a+b}}
+ N\left(\frac{bN}{aM}\right)^{-\tfrac{b}{a+b}}
= \frac{a+b}{a}\left(\frac{b}{a}\right)^{-\tfrac{b}{a+b}} 
M^{\tfrac{b}{a+b}} N^{\tfrac{a}{a+b}}. 
\end{equation*}
This proves the claim.
\end{proof}

\smallskip

\begin{proof}[\textbf{Proof of Theorem \ref{Theorem: CKN with opt const}}] Under the assumption of Theorem \ref{Theorem: CKN with opt const}, define the functional
\begin{equation}
    I(u):= \frac{1}{p} \int_{\mathbb{R}^{d}} \int_{\mathbb{R}^{d}} \frac{|u(x)-u(y)|^{p}}{|x-y|^{d+sp}} \, dx \, dy + \frac{1}{q} \int_{\mathbb{R}^{d}} |u(x)|^{q} \, dx.
\end{equation}
Now,  let $K>0$ be a constant, which we fix later. Conside the set
\begin{equation*}
    \Lambda:= \left\{ u \in \mathcal{D}^{p,q}_{s}  : \int_{\mathbb{R}^{d}} |u(x)|^{r} \, dx =K \right\}.
\end{equation*}
From Theorem \ref{Theorem: CKN ineq without opt}, we immediately get
\begin{equation}
    \eta : = \inf_{u \in \Lambda} I(u)>0.
\end{equation}
Next, define the scaling transform for parameters $\alpha:= \frac{d(p-1)}{p(q-1)} >0$ and $\lambda>0$,
\begin{equation*}
     T_{\lambda}(u) = u_{\lambda}(x) = \lambda^{\alpha} u(\lambda x).
\end{equation*}
By change of variables, we find
\begin{equation*}
    \int_{\mathbb{R}^{d}} \int_{\mathbb{R}^{d}} \frac{|u_{\lambda}(x)-u_{\lambda}(y)|^{p}}{|x-y|^{d+sp}} \, dx \, dy = \lambda^{\alpha  p -(d-sp)} \int_{\mathbb{R}^{d}} \int_{\mathbb{R}^{d}} \frac{|u(x)-u(y)|^{p}}{|x-y|^{d+sp}} \, dx \, dy,
\end{equation*}
and 
\begin{equation*}
    \int_{\mathbb{R}^{d}} |u_{\lambda}(x)|^{q} \, dx = \lambda^{-(d- \alpha q)} \int_{\mathbb{R}^{d}} |u(x)|^{q} \, dx.
\end{equation*}
By using the definition of $\alpha$, we have
\begin{equation*}
    \alpha p -(d-sp) = \frac{p(d-s)-q(d-sp)}{q-1}>0 \hspace{3mm} \text{iff} \hspace{3mm} q<\frac{p(d-s)}{d-sp},
\end{equation*}
and 
\begin{equation*}
    d-\alpha q = \frac{d(q-p)}{p(q-1)}>0 \hspace{3mm} \text{as} \hspace{3mm} q>p.
\end{equation*}
It is easy to deduce that for any $u \in \Lambda$, we have $u_{\lambda} \in \Lambda$ for all $\lambda>0$. Consequently, $\inf_{\lambda>0} I(u_{\lambda}) \le I(u_1) = I(u)$, which implies
\begin{equation}\label{eq:one-sided-inf}
    \inf_{u \in \Lambda} \inf_{\lambda>0} I(u_{\lambda})
    \leq
    \inf_{u \in \Lambda} I(u).
\end{equation}
Now set $a= \alpha p-(d-sp)>0$, $b= d-\alpha q>0$, $M= \frac{1}{p} [u]^{p}_{W^{s,p}(\mathbb{R}^{d})}>0$, and $N= \frac{1}{q} \|u\|^{q}_{L^{q}(\mathbb{R}^{d})}>0$. By Lemma \ref{Lemma: on a,b,M,N}, there exists a unique $\lambda_{u}>0$ such that $\inf_{\lambda>0} I(u_{\lambda}) = I(u_{\lambda_{u}})$ for every $u \in \Lambda$. Therefore, using the fact that for any $u \in \Lambda$ we have
$u_{\lambda_{u}} \in \Lambda$, we obtain
\begin{equation*}
   \inf_{u \in \Lambda} I(u)
\leq
I(u_{\lambda_{u}})
=
\inf_{\lambda>0} I(u_{\lambda}), \quad \forall \, u \in \Lambda  \quad \Rightarrow  \quad \inf_{u \in \Lambda} I(u)
    \leq
    \inf_{u \in \Lambda} \inf_{\lambda>0} I(u_{\lambda}).
\end{equation*}
Applying Lemma \ref{Lemma: on a,b,M,N} once again with the above parameters, and using the two estimates obtained above, we conclude that
\begin{align*}
    \inf_{u \in \Lambda} I(u) &= \inf_{u \in \Lambda} \, \inf_{\lambda>0} I(u_{\lambda}) \\ & = \inf_{u \in \Lambda}  \, \inf_{\lambda>0} \left\{ \lambda^{\alpha p -(d-sp)} \frac{1}{p} [u]^{p}_{W^{s,p}(\mathbb{R}^{d})}  +  \lambda^{-(d- \alpha q)} \frac{1}{q} \| u \|^{q}_{L^{q}(\mathbb{R}^{d})} \right\} \\ & = \frac{\alpha (p-q)+sp}{\alpha p - (d-sp)} \left( \frac{d- \alpha q}{\alpha p- (d-sp)} \right)^{\frac{\alpha q-d}{\alpha(p-q)+sp}} \left( \frac{1}{p^{\frac{d- \alpha q}{\alpha (p-q)+sp}}} \right) \left( \frac{1}{q^{\frac{\alpha p - (d-sp)}{\alpha(p-q)+sp}}} \right) \times \\ &  \hspace{5mm} \inf_{u \in \Lambda} [u]^{{\frac{p(d- \alpha q)}{\alpha (p-q)+sp}}}_{W^{s,p}(\mathbb{R}^{d})}  \, \| u \|^{\frac{q(\alpha p - (d-sp))}{\alpha(p-q)+sp}}_{L^{q}(\mathbb{R}^{d})}.
\end{align*}
Let 
\begin{equation*}
    \widetilde{C} := \frac{\alpha (p-q)+sp}{\alpha p - (d-sp)} \left( \frac{d- \alpha q}{\alpha p- (d-sp)} \right)^{\frac{\alpha q-d}{\alpha(p-q)+sp}}\left( \frac{1}{p^{\frac{d- \alpha q}{\alpha (p-q)+sp}}} \right) \left( \frac{1}{q^{\frac{\alpha p - (d-sp)}{\alpha(p-q)+sp}}} \right).
\end{equation*}
Then, we have
\begin{equation*}
    \inf_{u \in \Lambda} I(u) = \widetilde{C} \, \inf_{u \in \Lambda} \, [u]^{{\frac{p(d- \alpha q)}{\alpha (p-q)+sp}}}_{W^{s,p}(\mathbb{R}^{d})} \,  \| u \|^{\frac{q(\alpha p - (d-sp))}{\alpha(p-q)+sp}}_{L^{q}(\mathbb{R}^{d})}.
\end{equation*}
For any $u \in \mathcal{D}^{p,q}_{s}$, define $u_{0}(x) = K^{\frac{1}{r}} \frac{u(x)}{\| u \|_{L^{r}(\mathbb{R}^{d})}}$. Then,
\begin{equation*}
    \int_{\mathbb{R}^{d}} |u_{0}(x)|^{r} \, dx   = K.
\end{equation*}
Therefore, $u_{0} \in \Lambda$. Then, we have
\begin{align*}
   \inf_{u \in \Lambda} \, I(u) & = \widetilde{C} \, \inf_{u \in \mathcal{D}^{p,q}_{s}} \, \left[ \frac{K^{\frac{1}{r}}u}{\| u \|_{L^{r}(\mathbb{R}^{d})}} \right]^{{\frac{p(d- \alpha q)}{\alpha (p-q)+sp}}}_{W^{s,p}(\mathbb{R}^{d})} \,  \norm{ \frac{K^{\frac{1}{r}} u}{\norm{u  \|_{L^{r}(\mathbb{R}^{d})}}} }^{\frac{q(\alpha p - (d-sp))}{\alpha(p-q)+sp}}_{L^{q}(\mathbb{R}^{d})} \\ & = \widetilde{C} \, K ^{\frac{d(p-q)+spq}{r (\alpha(p-q)+sp)}}  \, \inf_{ u \in \mathcal{D}^{p,q}_{s}} \,  \frac{[u]^{{\frac{p(d- \alpha q)}{\alpha (p-q)+sp}}}_{W^{s,p}(\mathbb{R}^{d})} \, \| u \|^{\frac{q(\alpha p - (d-sp))}{\alpha(p-q)+sp}}_{L^{q}(\mathbb{R}^{d})}}{\| u \|^{\frac{d(p-q)+spq}{\alpha(p-q)+sp}}_{L^{r}(\mathbb{R}^{d})}}.
\end{align*}
Next, we fix
\begin{align*}
    K = \left( \frac{1}{\widetilde{C}} \right)^{\frac{r( \alpha(p-q)+sp)}{d(p-q)+spq}} & = \left(\frac{\alpha p-(d-sp)}{\alpha(p-q) +sp} \right)^{\frac{r( \alpha(p-q)+sp)}{d(p-q)+spq}} \left( \frac{\alpha p-(d-sp)}{d-\alpha q} \right)^{\frac{r(\alpha q-d)}{d(p-q)+spq}} \times \\ & \hspace{6mm} p^{\frac{r(d-\alpha q)}{d(p-q)+spq}} \, q^{\frac{r(\alpha p -(d-sp))}{d(p-q)+spq}},
\end{align*}
which implies that
\begin{equation*}
  \eta = \inf_{u \in \Lambda} \, I(u)  = \inf_{ u \in \mathcal{D}^{p,q}_{s}}  \,  \frac{[u]^{{\frac{p(d- \alpha q)}{\alpha (p-q)+sp}}}_{W^{s,p}(\mathbb{R}^{d})} \, \| u \|^{\frac{q(\alpha p - (d-sp))}{\alpha(p-q)+sp}}_{L^{q}(\mathbb{R}^{d})}}{\| u \|^{\frac{d(p-q)+spq}{\alpha(p-q)+sp}}_{L^{r}(\mathbb{R}^{d})}}. 
\end{equation*}
Therefore, we obtain
\begin{equation*}
    \| u \|_{L^{r}(\mathbb{R}^{d})} \leq \left( \frac{1}{\eta} \right)^{\frac{\alpha(p-q)+sp}{d(p-q)+spq}} \, [u]^{\frac{p(d- \alpha q)}{d(p-q)+spq}}_{W^{s,p}(\mathbb{R}^{d})} \, \| u \|^{\frac{q(\alpha p-(d-sp))}{d(p-q)+spq}}_{L^{q}(\mathbb{R}^{d})}, \quad \forall \ u \in \mathcal{D}^{p,q}_{s}.
\end{equation*}
Define $\mathscr{L} := \left( \frac{1}{\eta} \right)^{\frac{\alpha(p-q)+sp}{d(p-q)+spq}}$. From the above analysis, it follows that, $\mathscr{L}$ is the optimal constant in the interpolation inequality. Substituting the value of $\alpha$ in the above inequality, we arrive at 
\begin{equation*}
     \| u \|_{L^{r}(\mathbb{R}^{d})} \leq \mathscr{L} [u]^{a}_{W^{s,p}(\mathbb{R}^{d})} \| u \|^{1-a}_{L^{q}(\mathbb{R}^{d})}, \quad \forall \ u \in \mathcal{D}^{p,q}_{s}.
\end{equation*} 
This proves the theorem.
\end{proof}

\bigskip

\textbf{Acknowledgement:} The author thanks the Department of Mathematics and Statistics at the Indian Institute of Technology Kanpur, India, for providing a supportive research environment. The author also acknowledges the support of the Indian Institute of Technology Kanpur, India, through the FARE (Fellowship for Academic and Research Excellence) fellowship. The author is thankful to Michał Kijaczko for suggesting the extension of logarithmic Sobolev inequalities to the weighted fractional Sobolev space.


\end{document}